\newcommand{\N}{\mathbb{N}}
\newcommand{\R}{\mathbb{R}}
\newcommand{\Pw}{\mathcal{P}}
\newcommand{\Inv}{\text{Inv}}
\newcommand{\Int}{\text{int}}
\newcommand{\cl}{\text{cl}}
\newtheorem{theorem}{Theorem}[section]
\newtheorem{lemma}{Lemma}[section]
\newtheorem{exmp}{Example}[section]
\newtheorem{definition}{Definition}[section]
\title{Conley Index Theory and the Attractor-Repeller Decomposition for Differential Inclusions}
\author{Cameron Thieme}
\date{\today}
\begin{document}

\maketitle

\begin{abstract}
The Conley index theory is a powerful topological tool for describing the basic structure of dynamical systems.  One important feature of this theory is the attractor-repeller decomposition of isolated invariant sets.  In this decomposition, all points in the invariant set belong to the attractor, its associated dual repeller, or a connecting region.  In this connecting region, points tend towards the attractor in forwards time and the repeller in backwards time.  This decomposition is also, in a certain topological sense, stable under perturbation.  

Conley theory is well-developed for flows and homomorphisms, and has also been extended to some more abstract settings such as semiflows and relations.  In this paper we aim to extend the attractor-repeller decomposition, including its stability under perturbation, to continuous time set-valued dynamical systems.  The most common of these systems are differential inclusions such as Filippov systems.  
\end{abstract}
\let\thefootnote\relax\footnote{2010 \textit{Mathematics Subject Classification}. 37B35, 34A60, 37B30, 37B25, 37B45}
\let\thefootnote\relax\footnote{{\it Key words and phrases}. Conley Index, Differential Inclusions, Filippov, Attractor-Repeller Pair}
\section{Introduction}

In recent years, mathematicians and scientists have become increasingly interested in set-valued dynamical systems.  These systems are often described as differential inclusions 
\begin{equation}\label{dfnc}
\dot{x}\in F(x)
\end{equation} where $F:X\to\Pw(\R^n)$ is a set-valued map, $X$ is a subset of $\R^n$, and $\Pw(Y)$ denotes the power-set of the set $Y$.  
A solution of \eqref{dfnc} is an absolutely continuous function $$x:I\to\R^n$$ on some interval $I\subset \R$ whose derivative satisfies $\dot{x}(t)\in F(x(t))$ for almost all $t$ on the interval $I$.  

Differential inclusions are used extensively in control theory, and many phenomena, including friction and mechanical switching, are often modeled as differential inclusions \cite{bernardo}.  These inclusions also frequently arise in climate science \cite{welander}.

Because the set-valued nature of these systems allows for non-unique solutions, the behavior of differential inclusions can be very difficult to understand.  In this paper we will try to describe some of the qualitative features of these systems by extending some results from Conley index theory--a topological tool first developed for flows--to this setting.  In particular, we will extend the attractor-repeller decomposition of invariant sets to this more general setting.  For readers interested in the classical results of Conley index theory, \cite{mischaikow} provides an excellent introduction.  

In the classical setting, if $S$ is an invariant set for some flow $\varphi$, then we call $A\subset S$ an \textit{attractor in $S$} if $A$ is the $\omega$-limit set of a neighborhood of itself in $S$.  Associated to this attractor is a dual-repeller $R:=\{x\in S| \omega(x)\not\subset A\}$, and for all other points $x\in S$, $\alpha(x)\subset R$ and $\omega(x)\subset A$.  Moreover, this decomposition is stable in a topological sense; this attractor-repeller decomposition continues to nearby flows.  In this paper we will show that all of these results also hold for the solution set to \eqref{dfnc}.  The general decomposition is done in theorem \ref{att_rep_decomp} and the continuation result is theorem \ref{att_rep_cont_thm}.

In order to discuss these concepts, we will need to define $\omega$-limit sets, attractors, and dual repellers for the inclusion \eqref{dfnc}.  Other works (\cite{bhs}, \cite{oyama}, \cite{li}, \cite{kopanskii}, \cite{valero1}, \cite{valero2}) have already done similar work for certain classes of differential inclusions, but the notions adopted here are slightly different because of our distinct perspective on the solution set of \eqref{dfnc}.  Indeed, some of these works present similar theorems to those appearing in this paper, giving some kind of attractor-repeller decomposition for differential inclusions.  However, we choose to pursue our altered perspective because it aligns well with the Conley index; our ultimate aim is to place differential inclusions into a unified context where they can be completely analyzed using the ideas presented by Conley in \cite{conley}.  

In particular, we are concerned with the continuation of the decomposition to nearby systems.  This property has important implications for piecewise-continuous differential equations--which are generally reframed as differential inclusions(\cite{filippov})--because it allows us to make rigorous statements about certain families of smooth systems which limit to the discontinuous one (\cite{thieme2}).  Additionally, our setting is very general, allowing us to place minimal conditions, and no bounding term, on the set-valued map defining the inclusion \eqref{dfnc}.

%That is, since differential inclusions may have non-unique solutions, there are two possible notions of an invariant set; roughly speaking, a set $S$ could be called invariant if solutions \textit{must} remain in $S$ (strong invariance), or if solutions \textit{may} remain in the set $S$ (weak invariance).  In \cite{li}, the invariant sets which are analyzed have the more restrictive hypothesis of strong invariance

This paper is split into five main sections.  The first section is this introduction, and the next one is a review of some results from differential inclusions.  The third section describes the attractor-repeller pair decomposition, including the new generalizations of the $\omega$-limit set.  We discuss the continuation of this decomposition in the following section.  The final section contains a few concluding remarks and acknowledges some valuable contributions.  

\section{The Basics of Differential Inclusions}\label{diffinc}

Before diving into the extension of Conley theory we will review some basic information about differential inclusions and state all of our assumptions on the set-valued map $F$ from \eqref{dfnc}.  We will also discuss the \textit{multiflow}, a set-valued analog of the flow that was introduced by Richard McGehee in order to describe the solution set to differential inclusions \cite{mcspeech}.  This set-valued map is distinct from earlier, similar objects (\cite{bhs}, \cite{oyama}, \cite{li}, \cite{kopanskii}, \cite{valero1}, \cite{valero2}) because it allows the image of a point to be empty in at some positive time, giving us a unique way to deal with finite-time blowup and allowing us to study a larger class of differential inclusions.  

\subsection{Differential Inclusions}

In order to analyze solutions to \eqref{dfnc}, we need to place a few assumptions on the map $F$.  The first condition is \textit{upper-semicontinuity}, and it is somewhat analogous to continuity in the single-valued setting.  

\begin{definition}
Let $X$ and $Z$ be metric spaces.  A set-valued function $F:X\to\Pw(Z)$ is said to be \textbf{upper-semicontinuous at the point x} if for any $\varepsilon>0$ there exists some $\delta>0$ such that $|x-y|<\delta$ implies that $F(y)\subset B_\varepsilon(F(x))$.
 
$F$ is said to be \textbf{upper-semicontinuous} if it is upper-semicontinuous at each $x\in X$.
\end{definition}

In addition to upper-semicontinuity, there are a few more properties of the set-valued map $F$ that we demand for the differential inclusion \eqref{dfnc}.  These requirements are described as the basic conditions by Filippov (\cite{filippov}), but we will give them a new name that reflects his role in this theory.

\begin{definition}
Let $X,Z$ be metric spaces.  The set-valued map $F:X\to\Pw(Z)$ is said to satisfy the \textbf{Filippov Conditions} if it is upper-semicontinuous and if the set $F(x_0)$ is
\begin{itemize}
\item Compact
\item Convex
\item Non-empty
\end{itemize}
for each $x_0\in X$.  
\end{definition}

Notice that the Filippov conditions do not place any sort of linear bounding term on the map $F$.  These will be the only assumptions that we place on our maps in this paper, making these results on limit sets somewhat more general than those found in \cite{oyama} or \cite{bhs}.

For the remainder of this paper, we will deal with differential inclusions defined in Euclidean space.  In \cite{filippov}, Filippov demonstrates that solutions to \eqref{dfnc} share several properties with the solutions to ordinary differential equations.  In particular, solutions in a compact domain are equicontinuous, and the limit of a uniformly convergent sequence of solutions is itself a solution.  These results mean that the solution set of \eqref{dfnc} behaves somewhat like a continuous flow $\varphi$.  Combining these results with the Arzela–Ascoli theorem, we can get the following lemma, which will be needed throughout our paper:

\begin{lemma}\label{extunisolbb}\cite{thieme2}
Assume that $X\subset\R^n$ is compact and that $F:X\to\R^n$ satisfies the Filippov conditions.  Given any sequence $\{x_k:\R\to X\}_{k=1}^\infty$ of solutions  to the differential inclusion $\dot{x}\in F(x)$, there is a solution $$x:\R\to X$$ to that inclusion such that on any compact interval $[a,b]\subset\R$, there is a subsequence of the restricted family $$\{x_k|_{[a,b]}:[a,b]\to X\}_{k=1}^\infty$$ which converges uniformly to $x|_{[a,b]}$.
\end{lemma}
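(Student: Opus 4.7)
The plan is a standard Arzel\`a--Ascoli plus diagonal extraction argument, leveraging the two facts about solutions of \eqref{dfnc} already attributed to Filippov in the preceding paragraph: equicontinuity of solutions in a compact domain, and closure of the solution set under uniform convergence.

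First I would verify equicontinuity and uniform boundedness. Since $X$ is compact and $F$ is upper-semicontinuous with compact values, the set $F(X)=\bigcup_{x\in X}F(x)$ is contained in a bounded subset of $\R^n$; call its diameter bound $M$. Every solution $x_k$ of \eqref{dfnc} is absolutely continuous with $|\dot x_k(t)|\le M$ almost everywhere, so each $x_k$ is $M$-Lipschitz. Uniform boundedness is immediate since each $x_k$ takes values in the compact set $X$. Thus on any compact interval $[a,b]$, the family $\{x_k|_{[a,b]}\}$ is precompact in $C([a,b],X)$ by Arzel\`a--Ascoli.

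Next I would run a diagonal extraction. Applying Arzel\`a--Ascoli on $[-1,1]$ yields a subsequence $\{x_{k_j^{(1)}}\}$ converging uniformly on $[-1,1]$ to some continuous $y_1:[-1,1]\to X$. Passing to a further subsequence $\{x_{k_j^{(2)}}\}$ of $\{x_{k_j^{(1)}}\}$, I obtain uniform convergence on $[-2,2]$ to some $y_2$ which must agree with $y_1$ on $[-1,1]$. Iterating and taking the diagonal subsequence $x_j^\ast:=x_{k_j^{(j)}}$ produces a single sequence which converges uniformly on every $[-n,n]$ to a well-defined continuous function $x:\R\to X$ satisfying $x|_{[-n,n]}=y_n$.

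Finally I would identify $x$ as a solution. For each $n$, the tail of $\{x_j^\ast|_{[-n,n]}\}$ is a uniformly convergent sequence of solutions to \eqref{dfnc} on $[-n,n]$, so by the Filippov closure property recalled just above the lemma, $x|_{[-n,n]}$ is itself a solution on $[-n,n]$. Since $n$ was arbitrary, $x$ satisfies $\dot x(t)\in F(x(t))$ for almost every $t\in\R$. For the statement of the lemma, given any compact $[a,b]\subset\R$ choose $n$ with $[a,b]\subset[-n,n]$; then $\{x_j^\ast|_{[a,b]}\}$ is the required uniformly convergent subsequence of $\{x_k|_{[a,b]}\}$.

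The only point that requires any care is the derivation of the uniform Lipschitz bound from the Filippov conditions on a compact domain; once $M$ is in hand, the remainder is a textbook diagonal argument. No serious obstacle is anticipated, since both technical inputs (equicontinuity and closure under uniform limits) are cited as established facts immediately before the lemma statement.
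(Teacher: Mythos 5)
Your proposal is correct and follows essentially the same route the paper indicates for this lemma (which it cites from \cite{thieme2}): equicontinuity of solutions via the bound on $F(X)$, Arzel\`a--Ascoli with a diagonal extraction over expanding compact intervals, and Filippov's closure of the solution set under uniform limits to identify the limit as a solution. In fact your diagonal argument yields a single subsequence converging on every compact interval, which is slightly stronger than the stated conclusion.
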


\subsection{Multiflows: The Solution Set for Differential Inclusions}

Since differential inclusions do not have unique solutions---a given initial condition may be sent to infinitely many different locations at a fixed time $t$---their solution set can be very complicated.  Moreover, we cannot study the solution set with single-valued maps like flows.  To address this issue, Richard McGehee has proposed a different object, the \textit{multiflow}, which generalizes flows to this setting \cite{mcspeech}.  For a more complete exposition on this subject (and differential inclusions in general), the reader is referred to an earlier paper \cite{thieme}.

\begin{definition}
A \textbf{multiflow} on a metric space $X$ is a set-valued map $$\Phi:\R^+\times X\to\Pw(X)$$ which is upper-semicontinuous and compact-valued, and which satisfies the monoid properties:
\begin{itemize}
\item $\Phi(0,x)=\{x\}$
\item $\Phi(t,\Phi(s,x))=\Phi(t+s,x)$
\end{itemize}
\end{definition}
\noindent This definition relies on the notation that if $U\subset X$, $$\Phi(t,U)=\cup_{x\in U} \Phi(t,x)$$

In the remainder of the document, we will also utilize the notation that for $I\subset\R$, then $$\Phi(I,x)=\cup_{t\in I} \Phi(t,x),\hspace{1cm}\Phi(I,U)=\cup_{t\in I}\cup_{x\in U} \Phi(t,x)$$

The multiflow is a useful object because the set of all solutions to \eqref{dfnc} in any compact domain $X$ forms a multiflow:

\begin{theorem}\label{mtflw}\cite{thieme}
Let $G$ be an open subset of $\R^n$, let $X\subset G$ be compact, and let $F:G\to\Pw(\R^n)$ satisfy the Filippov conditions.  Define the set-valued map $$\Phi:\R^+\times X\to \Pw(X)$$ by saying that $b\in\Phi(T,a)$ if and only if there exists a solution $x:[0,T]\to X$ to the differential inclusion $\dot{x}\in F(X)$ with $x(0)=a$ and $x(T)=b$.

Then $\Phi$ is a multiflow over $X$.
\end{theorem}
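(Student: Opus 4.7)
The plan is to verify the four defining properties of a multiflow in turn: the identity relation $\Phi(0,x)=\{x\}$, the semigroup relation, compact-valuedness, and upper-semicontinuity. The identity relation is immediate: the only map from $\{0\}$ to $X$ with value $a$ at $0$ is the constant one, so $\Phi(0,a)=\{a\}$. For the semigroup relation I would prove both inclusions by splitting and gluing solutions. Given $b\in\Phi(t+s,a)$ with witness $\gamma:[0,t+s]\to X$, the restriction $\gamma|_{[0,s]}$ shows $\gamma(s)\in\Phi(s,a)$ and the translate $\tau\mapsto\gamma(\tau+s)$ on $[0,t]$ shows $b\in\Phi(t,\gamma(s))$. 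Conversely, given $c\in\Phi(s,a)$ and $b\in\Phi(t,c)$ with witnesses $\gamma_1,\gamma_2$, their concatenation at $c$ is absolutely continuous (absolute continuity is a local property and the two pieces match) and satisfies the inclusion almost everywhere, so $b\in\Phi(t+s,a)$.

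The two topological properties both rest on Lemma \ref{extunisolbb}. Since $F$ is upper-semicontinuous with compact values on the compact set $X$, it is uniformly bounded there, so every $X$-valued solution is uniformly Lipschitz with a single constant $L$. For compact-valuedness, suppose $b_k\in\Phi(T,a)$ converges to $b$; after extending each witnessing solution $x_k:[0,T]\to X$ to a solution on $\R$ (using Filippov's continuation together with the fact that solutions cannot escape the compact set $X$ in finite time if they are required to remain in $X$), Lemma \ref{extunisolbb} supplies a subsequence converging uniformly on $[0,T]$ to some solution $x$ with $x(0)=a$ and $x(T)=b$, so $b\in\Phi(T,a)$ and $\Phi(T,a)$ is closed in the compact set $X$. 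For upper-semicontinuity at $(T,a)$, I would argue by contradiction: if there are $(T_k,a_k)\to(T,a)$ and $b_k\in\Phi(T_k,a_k)$ with $\mathrm{dist}(b_k,\Phi(T,a))\geq\varepsilon$, extract witnesses $x_k$, invoke Lemma \ref{extunisolbb} to get a subsequential uniform limit $x:\R\to X$ with $x(0)=a$, and combine uniform convergence with the Lipschitz estimate $|x_k(T_k)-x_k(T)|\leq L|T_k-T|$ to conclude $b_k\to x(T)\in\Phi(T,a)$, contradicting the separation.

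The main obstacle I anticipate is the preparatory step of extending each partial witness $x_k:[0,T_k]\to X$ to a full solution $\R\to X$ so that Lemma \ref{extunisolbb} applies as stated. Solutions are only guaranteed to remain in the compact set $X$ over the intervals on which they are originally defined, so a straightforward continuation may push them out of $X$ into $G\setminus X$. The cleanest remedy is to sidestep the extension entirely by re-deriving the needed compactness directly from equicontinuity (via Arzel\`a--Ascoli on a fixed compact interval containing all $T_k$) together with the stability of the solution set under uniform limits, which is the genuine content of Lemma \ref{extunisolbb}; alternatively, one may modify $F$ off $X$ to a globally defined, uniformly bounded, Filippov-compatible map whose $X$-valued solutions coincide with those of the original. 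Either way the remainder of the argument is formal, and the verification of the multiflow axioms goes through.
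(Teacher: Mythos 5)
The paper does not actually prove Theorem \ref{mtflw}; it is imported from \cite{thieme}, so there is no in-paper argument to compare against. Your plan is the expected one and is essentially correct: the monoid properties follow from restriction and concatenation of solutions exactly as you say, and the two topological properties come from uniform boundedness of $F$ on the compact set $X$ (hence a uniform Lipschitz constant for $X$-valued solutions), Arzel\`a--Ascoli, and Filippov's theorem that uniform limits of solutions are solutions --- which, as you note, is the real content behind Lemma \ref{extunisolbb}. Two small corrections to your own hedging: the parenthetical claim that solutions ``cannot escape the compact set $X$ in finite time if they are required to remain in $X$'' is false in this setting --- the possibility of escape is precisely why $\Phi(t,x)$ may be empty --- but no extension to $\R$ is needed anyway, since for compactness the witnesses are already defined on the fixed interval $[0,T]$, and for upper-semicontinuity you can handle the varying domains $[0,T_k]$ by a diagonal argument on $[0,T-\delta]$ (or a constant extension past $T_k$) plus the Lipschitz bound you already invoke to control $x_k(T_k)$ versus the limit at $T$ (this also covers $T=0$). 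Your alternative remedy of modifying $F$ off $X$ does not by itself fix the problem, since Lemma \ref{extunisolbb} requires $X$-valued global solutions and extended solutions may leave $X$; stick with the direct Arzel\`a--Ascoli route, which closes the argument.
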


It is also important to note that way we have defined the solution set here is somewhat unusual--the multiflow consists of solutions to the differential equation up until they leave the compact set.  That means it entirely possible that $\Phi(t,x)=\emptyset$ for some $x\in X$ and $t>0$.  For instance, if we consider the trivial differential inclusion $\dot{x}\in F(x)=1$ on the compact interval $X=[0,1]$, we see that $\Phi(t,x)=\emptyset$ for all $x$ and all $t>1-x$.  

Although this feature is relatively odd--the maximal solution for a given initial condition may exist on a compact interval--it is very helpful.  In fact, this perspective is what allows us to consider such a general class of differential inclusions.  Other work on set-valued dynamics (\cite{bhs}, \cite{oyama}, \cite{li}, \cite{kopanskii}, \cite{valero1}, \cite{valero2}) do not allow for $\Phi(t,x)$ to be empty, which places a restriction on the class of differential inclusions which can be considered.  Moreover, this perspective fits well with the idea of the Conley index theory, where global behavior can be understood using only information about the flow on the boundary of a compact set.  

One unfortunate drawback to multiflows is that, like semiflows, they consider only forward time.  This feature is a necessity; it is impossible to retain the group action of all of $\R$ \cite{thieme}.  So that we may consider solutions in backwards time we will introduce the dual multiflow $\Phi^*:\R^-\times X\to X$, which we define pointwise by $$\Phi^*(T,a):=\{b\in X|a\in\Phi(-T,b)\}$$

Essentially, we may think of this object as the backwards time equivalent of the multiflow $\Phi$.  If $\Phi$ arises from the differential inclusion $\dot{x}\in F(x)$, as in theorem \ref{mtflw}, then it is straightforward to verify that $\Phi^*$ is the set of all solutions to the same differential inclusion in backwards time.  That is, $$b\in\Phi^*(T,a)$$ if and only if there is a solution $$x:[T,0]\to X$$ satisfying $x(0)=a$ and $x(T)=b$.  

Finally, if the multiflow $\Phi$ arises from the differential inclusion $\dot{x}\in F(x)$ then we will call solutions of that differential inclusion \textbf{orbits} on $\Phi$.  Note that if an orbit $\psi$ has a domain that includes both positive and negative times then its image lies in both $\Phi$ and $\Phi^*$.  To be more specific about what that means, assume that $I\subset\R$ is an interval around $0$ and $\psi:I\to X$ satisfies $\dot{\psi}(t)\in F(\psi(t))$ for almost all $t\in I$.  If $T\in I$ is positive then $$\psi(T)\in\Phi(T,\psi(0))$$ and if $T\in I$ is negative then $$\psi(T)\in\Phi^*(T,\psi(0))$$

\section{Attractor-Repeller Decomposition}

For the remainder of this section we will assume that $\Phi: \R^+\times X\to X$ is the multiflow over the compact space $X\subset \R^n$ associated to the differential inclusion \eqref{dfnc}.  Our goal for this section will be to discuss the attractor-repeller decomposition of compact invariant sets of $\Phi$.  Before doing so, however, we should define specifically what is meant by invariant in this setting:

\begin{definition}
A set $S\subset X$ is called \textbf{invariant} under the multiflow $\Phi$ if for each $x\in S$ there exists an orbit $$\psi:\R\to S$$ on $\Phi$ with $\psi(0)=x$.
\end{definition}

We will often also consider the maximal invariant subset of a given set $U$ under the multiflow $\Phi$, which is $$\Inv(U,\Phi):=\{x\in U| \exists\,\text{orbit}\,\psi:\R\to U\,\text{on}\,\Phi,\,\psi(0)=x\}$$If the choice of multiflow $\Phi$ is clear, we will sometimes shorten this notation to $\Inv(U)$.  

Note that because we are working with set-valued systems, it is possible for points in an invariant set to have an orbit which leaves the invariant set.  That is, since a given point may have many different orbits, some of these orbits can leave the invariant set.  All that is required is that each point in the invariant set has \textit{at least one} orbit which stays in the set for all time.

This definition of invariance--which is sometimes called weak invariance--is not the only possible notion of invariance for differential inclusions.  We could, alternatively, demand that \textit{all} orbits stay in $S$ for all time, a condition that is sometimes called strong invariance.  It is this more restrictive notion of invariance that is analyzed in \cite{li}, which gives us a fairly different perspective.

\subsection{Limit Sets for Multiflows}

In order to discuss attractors and repellers, we need to define the concepts of limit sets for multiflows.

\begin{definition}
The $\omega$-limit set of a set $U$ is defined by $$\omega(U) = \cap_{t\geq 0} \overline{\Phi([t,\infty),U)}$$
\end{definition}

This definition of an $\omega$-limit set for multiflows is a direct generalization of the classical definition for flows.  Note that $\omega(U)$ is the set of all points $x\in X$ such that $$x = \lim_{n\to\infty} \psi_n(t_n)$$ where $\psi_n$ is an orbit on $\Phi$, $\psi_n(0)\in U$ and $t_n\to\infty$.  

As is the case with flows, we can also consider the $\alpha$-limit set, which is essentially the $\omega$-limit set in backwards time.  

\begin{definition}
The $\alpha$-limit set of a set $U$ is defined by $$\alpha(U) = \cap_{t<0}\overline{\Phi^*((-\infty,t],U)}$$
\end{definition}

Unfortunately, since some solutions can leave invariant sets, the $\omega$-limit set is not extremely well behaved.  For instance, if we have a multiflow over some space $X$ which is not itself invariant, it is possible that the $\omega$-limit set is also not invariant.  For instance, consider the differential inclusion $$\dot{x}\in F(x)=\begin{cases}
[0,1] & x= 0\\
1 & x \neq 0
\end{cases}$$Let $\Phi$ be the multiflow on the compact interval $X=[0,1]$ associated to this differential inclusion.  Then for $\Phi$, $\omega(0)=[0,1]=X$, but $X$ is not invariant.  Moreover, we note that $S=\{0\}$ is an invariant set for this multiflow, but $\omega(S)\not\subset S$.  

The fact that it is possible that $\omega(S)\not\subset S$ for an invariant set $S$ means that when we are trying to describe the dynamics on $S$, we need to take an extra step to restrict our view to $S$.  Therefore we will be concerned with the following object:

\begin{definition}
Let $\Phi:\R^+\times X\to \Pw(X)$ be a multiflow and let $S\subset X$.  Then $\Phi^S:\R^+\times S\to S$ is \textbf{the multiflow $\Phi$ restricted to $S$} and is defined pointwise by saying that $b\in\Phi^S(T,a)$ if and only if there exists an orbit $\psi:[0,T]\to S$ on $\Phi$ such that $\psi(0)=a$ and $\psi(T)=b$.
\end{definition}

Note in particular that the orbits here are followed only until they leave $S$, even if they return.  For instance, it is possible that there is some point $b\in S$ such that $b\in\Phi(T,a)$ for some $(T,a)\in \R^+ \times S$, but $b\not \in \Phi^S(T,a)$.

\begin{definition}
Let $S$ be a closed invariant set for the multiflow $\Phi$ and let $U\subset S$. Then the $\omega_S$ and $\alpha_S$ limit sets of $U$ are the sets $$\omega_S(U) = \cap_{t\geq 0} \overline{\Phi^S([t,\infty),U)}, \hspace{1cm}\alpha_S(U) = \cap_{t<0}\overline{(\Phi^S)^*((-\infty,t],U)}$$ 
\end{definition}

Similarly to the case of the general $\omega$-limit set, we notice that $\omega_S(U)$ is the set of all points $x\in S$ such that $$x = \lim_{n\to\infty} \psi_n(t_n)$$ where $\psi_n$ is an orbit on $\Phi^S$, $\psi_n(0)\in U$ and $t_n\to\infty$.  Since $S$ is invariant, however, we also may assume that $\psi_n$ is defined for all time.  Although this domain does not follow a priori from the definition--since the definition of invariance only requires that each point have an orbit which remains in $S$ for all time, and not that all orbits remain in $S$ for all time--we notice that we can always extend any orbit on $\Phi^S$ to a maximal orbit which exists for all time since $S$ is invariant.  Said another way, although the definition only directly implies that $\psi_n:I_n\to S$, where $I_n$ is an interval containing $[0,t_n]$, we can extend $\psi_n$ beyond this interval because $\psi_n$ evaluated at the endpoints of $I_n$ (or the limiting value, in the case where $I_n$ is not closed) is a point in $S$, and therefore there is some orbit which exists for all time at these points which we may append to $\psi_n$.  Therefore, when we consider the $\omega_S$-limit set, we only need to think about maximal orbits which are defined on all of the real line.  

Notice that in the special case where $\Phi$ is a flow, $\omega_S(U)=\omega(U)$ for all $U\subset S$, and so this object is simply a generalization of the classic $\omega$-limit set.  And as we will see in the following section, this object is much more well-behaved than the general $\omega$-limit set of a multiflow.  

\subsection{Attractor-Repeller Decomposition of Compact Invariant Sets}

For the remainder of this section, assume that $S\subset X$ is a compact invariant set for $\Phi$.  

\begin{lemma}\label{omg_inv}  
For $U\subset S$, both $\omega_S(U)$ and $\alpha_S(U)$ are non-empty and invariant.
\end{lemma}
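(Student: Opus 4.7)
The plan is to handle $\omega_S(U)$ first; the argument for $\alpha_S(U)$ is completely dual. Non-emptiness follows from a straightforward compactness argument: pick any $x\in U$, and since $S$ is invariant there is a full orbit $\psi:\R\to S$ on $\Phi$ with $\psi(0)=x$. The sequence $\{\psi(n)\}_{n\in\N}$ lies in the compact set $S$, so it has a subsequence $\psi(n_k)\to y$. For each $t\geq 0$, eventually $n_k\geq t$, and $\psi(n_k)\in\Phi^S([t,\infty),U)$, so $y\in\overline{\Phi^S([t,\infty),U)}$. Intersecting over $t\geq 0$ gives $y\in\omega_S(U)$.

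For invariance, fix $y\in\omega_S(U)$. Using the characterization of $\omega_S(U)$ discussed just after the definition, together with the observation that every orbit on $\Phi^S$ extends to a full orbit on $\R$ because $S$ is invariant, there exist orbits $\psi_n:\R\to S$ with $\psi_n(0)\in U$ and times $t_n\to\infty$ such that $\psi_n(t_n)\to y$. The key move is to shift: set $\tilde{\psi}_n(t):=\psi_n(t+t_n)$. Each $\tilde{\psi}_n$ is still a full orbit on $\Phi$ contained in $S$, with $\tilde{\psi}_n(0)\to y$. Apply Lemma \ref{extunisolbb} to $\{\tilde{\psi}_n\}$ to extract a subsequence converging uniformly on every compact interval to an orbit $\tilde{\psi}:\R\to X$; since $S$ is closed and each $\tilde{\psi}_n$ lies in $S$, so does $\tilde{\psi}$. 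Uniform convergence at $0$ forces $\tilde{\psi}(0)=y$.

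The remaining step, which is also the main obstacle, is to verify that $\tilde{\psi}(t)\in\omega_S(U)$ for every $t\in\R$ — it is not enough that $\tilde{\psi}$ stay in $S$, and it is exactly here that the shift by $t_n$ is doing the work. Fix $t\in\R$ and $T\geq 0$. For all sufficiently large $k$ we have $t+t_{n_k}\geq T$, and therefore $\tilde{\psi}_{n_k}(t)=\psi_{n_k}(t+t_{n_k})\in\Phi^S([T,\infty),U)$. Passing to the limit gives $\tilde{\psi}(t)\in\overline{\Phi^S([T,\infty),U)}$, and intersecting over $T\geq 0$ yields $\tilde{\psi}(t)\in\omega_S(U)$. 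Thus $\tilde{\psi}:\R\to\omega_S(U)$ is the desired orbit through $y$, proving invariance. The statement for $\alpha_S(U)$ follows by running the identical argument with $\Phi^S$ replaced by $(\Phi^S)^*$ and $t_n\to\infty$ replaced by $t_n\to-\infty$, the shifted orbits being constructed the same way.
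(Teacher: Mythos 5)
Your proof is correct and follows essentially the same route as the paper: non-emptiness via compactness of $S$ applied to points along full orbits through $U$, and invariance via the time-shifted orbits $\psi_n(\cdot+t_n)$ together with Lemma \ref{extunisolbb}. The only difference is cosmetic—you spell out the final verification that the limit orbit stays in $\omega_S(U)$ directly from the definition, which the paper leaves more implicit.
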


\begin{proof}
We will prove the result for the $\omega_S$-limit set and the same result for the $\alpha_S$-limit set follows by symmetry.  

Let $\{\psi_n\}_{n=1}^\infty$ be any sequence of orbits such that $\psi_n(\R)\subset S$ and $\psi_n(0)\in U$.  We know that such a sequence must exist because $S$ is invariant (and we do not demand that the $\psi_n$ are unique).  Let $t_n\to\infty$ and consider the sequence of points $\{\psi_n(t_n)\}_{n=1}^\infty\subset S$.  Since $S$ is compact, this sequence must contain a convergent subsequence with some limit $x$.  By definition, $x\in\omega_S(U)$, and therefore $\omega_S(U)$ is non-empty.  

Now let $x\in\omega_S(U)$, so $x = \lim_{n\to\infty} \psi_n(t_n)$.  For each $s\in\R$, let $\gamma_n(s)=\psi_n(t_n+s)$.  By lemma \ref{extunisolbb}, there is some orbit $\gamma:\R\to S$ such that on any compact interval $[a,b]$, the family $\{\gamma_n\}_{n=1}^\infty$ has some subsequence which converges uniformly to $\gamma$.  We see that $x=\gamma(0)$ and that $\gamma(s)=\lim_{n_k\to\infty}\gamma_{n_k}(s)=\lim_{n_k\to\infty}\psi_{n_k}(t_{n_k}+s)\in\omega_S(U)$ (for any given $s$ we can take $[a,b]$ to be large enough that we get the subsequence in that previous equality).  
\end{proof}

With the $\alpha_S$ and $\omega_S$ limit sets defined, we are now ready to define attractors and repellers for multiflows.  As in the traditional setting, an attractor is a set which is the $\omega_S$-limit set of a neighborhood of itself, and a repeller is a set which is the $\alpha_S$-limit set of some neighborhood of itself.  
\begin{definition}
A set $A\subset S$ is said to be an \textbf{Attractor in S} if there is a neighborhood $U$ of $A$ in $S$ such that $\omega_S(U)=A$.

A set $R\subset S$ is said to be an \textbf{repeller in S} if there is a neighborhood $U^*$ of $R$ in $S$ such that $\alpha_S(U^*)=R$.

\end{definition}

A crucial aspect of the attractor-repeller decomposition is that for a given attractor we can associate a specific dual repeller.  Symmetrically, if we begin with a repeller, we can associate a specific dual attractor.  

\begin{definition}
If $A\subset S$ is an attractor in $S$, then the \textbf{dual repeller of A in S} is the set $$R=\{x\in S| \omega_S(x)\not\subset A\}$$

If $R\subset S$ is a repeller in $S$, then the \textbf{dual attractor of R in S} is the set $$A=\{x \in S| \alpha_S(x)\not \subset R\}$$  

\end{definition}

At this point, it is unclear that the dual repeller is actually a repeller, or that the dual attractor is actually an attractor.  Moreover, it is unclear that the term \textit{dual} is justified--that the dual of the dual object is the original object.  However, we will ultimately see that this terminology is justified in theorem \ref{att_rep_decomp}.  This symmetry also distinguishes our work from earlier work extending the attractor-repeller decomposition to differential inclusions in \cite{li}.  In that paper, the repeller is not defined as an object in its own right, and only a definition of dual repeller is given.  By considering the notion of a repeller as its own object we see that more of the structure of the attractor-repeller decomposition carries over to this setting than was previously shown.  Before diving more into this structure, however, we should make a few remarks about our definition.

In the classical theory of Conley index for flows, the dual repeller is defined as the set $\{x\in S|\omega(x)\cap A=\emptyset\}$.  A simple lemma then shows that $\omega(x)\cap A\neq\emptyset$ if and only if $\omega(x)\subset A$, and so the definition that we have provided here for multiflows does indeed generalize the traditional definition.  However, in the case of differential inclusions it is possible for the $\omega_S$-limit set of a point to intersect an attractor without being a subset of that attractor, motivating our definition.  To see this phenomenon consider the following example:

\begin{exmp}\label{bad_dual_def}
Let $F:[-1,1]\to \R$ be defined by
$$F(x)=\begin{cases}
0 & x\in [-1,0)\\
[0,1] & x = 0\\
1-x & x\in(0,1]
\end{cases}$$

Note that $F$ satisfies the Filippov conditions, and let $\Phi$ be the associated multiflow.  Notice that $S=[-1,1]$ is invariant.  Then $A=\{1\}$ is an attractor in $S$.  Note that $\omega_S(0)=[0,1]$, and so $\omega_S(0)\cap A\neq\emptyset$ and $\omega_S(0)\not\subset A$. 

Also, notice that the set $$\{x\in [-1,1]|\omega_S(x)\cap A=\emptyset\}=[-1,0)$$ is not a repeller, but the set $$\{x\in [-1,1]|\omega_S(x)\not\subset A\}=[-1,0]$$ is a repeller (it is the dual-repeller to $A$).
\end{exmp}

\begin{figure}[h]
\begin{center}
\includegraphics[scale=0.4]{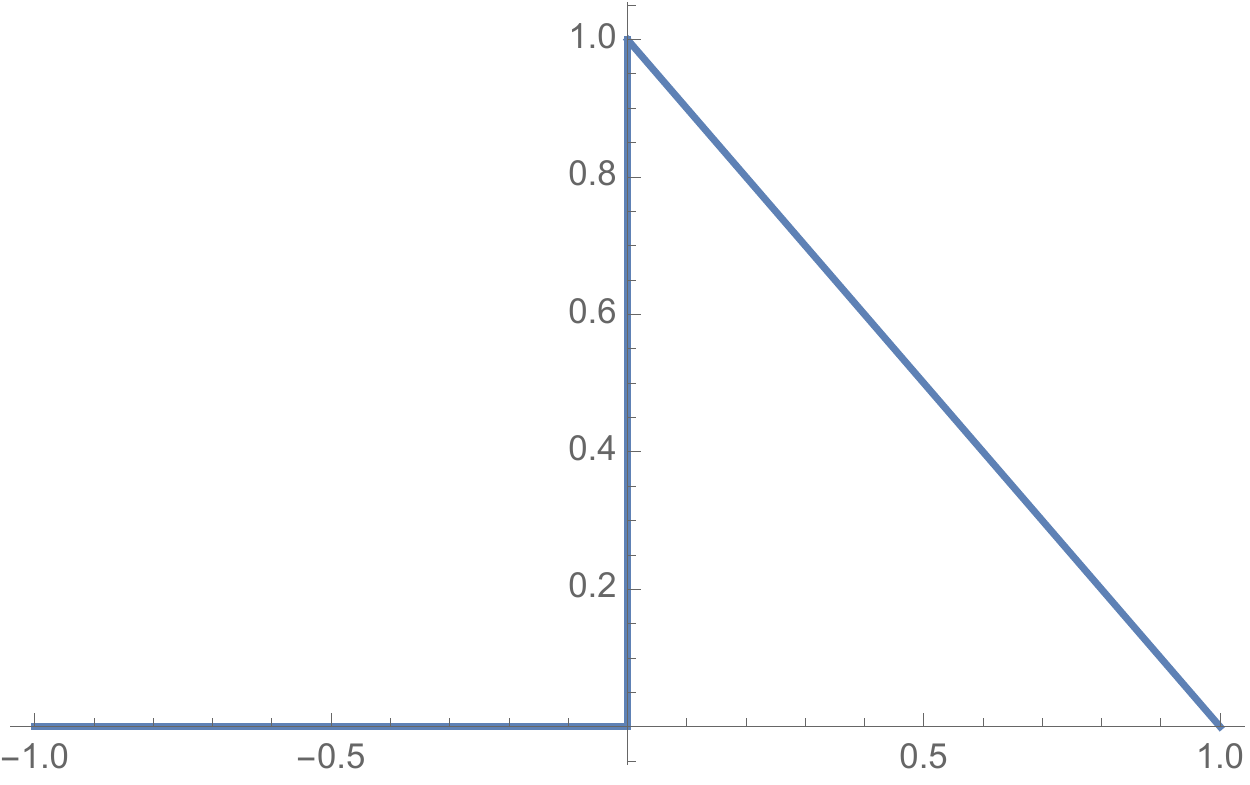}
\caption{The set-valued map $F$ from example \ref{bad_dual_def}}
\end{center}
\end{figure}

We also note that if $A$ is an attractor and $R$ is the associated dual-repeller, then $A\cap R=\emptyset$.  Therefore we can complete the decomposition of $S$ by simply taking the set of all remaining points, which we will call the connecting region.  

\begin{definition}
Given an attractor $A$ and its dual repeller $R$, define the \textbf{connecting region between $A$ and $R$} as $$C(A,R):=(A\cup R)^c$$
\end{definition}

Given these definitions, we see that $S=A\cup R\cup C(A,R)$.  We will call the pair $(A,R)$ an \textbf{attractor-repeller pair decomposition of the invariant set $S$}, and we will list its properties in theorem \ref{att_rep_decomp}.  Before stating and proving that theorem, however, we need the following lemma:

\begin{lemma}\label{contained_inv}
For any $U\subset S$, if $\omega_S(U)\subset U$ then $\Inv(U)=\omega_S(U)$.\\
Symmetrically, if $\alpha_S(U)\subset U$ then $\text{Inv}(U)=\alpha_S(U)$.
\end{lemma}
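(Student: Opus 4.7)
My plan is to prove both containments separately and then invoke the symmetric argument for the $\alpha_S$ statement.

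For $\omega_S(U) \subset \text{Inv}(U)$, I would lean directly on Lemma \ref{omg_inv}. The content of that lemma is not just that $\omega_S(U)$ is invariant in the abstract sense, but that the orbit $\gamma: \R \to S$ constructed through any point $x \in \omega_S(U)$ in fact has its entire image inside $\omega_S(U)$. Since the hypothesis gives $\omega_S(U) \subset U$, this $\gamma$ is an orbit through $x$ that stays in $U$ for all time, which is exactly the condition for $x \in \text{Inv}(U)$.

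For the reverse containment $\text{Inv}(U) \subset \omega_S(U)$, given $x \in \text{Inv}(U)$, pick an orbit $\psi: \R \to U$ on $\Phi^S$ with $\psi(0) = x$. I would then produce a sequence witnessing membership in $\omega_S(U)$ by time-translating $\psi$: set $\gamma_n(s) := \psi(s-n)$ and $t_n := n$. Each $\gamma_n$ is again an orbit on $\Phi^S$ (its image lies in $U \subset S$), $\gamma_n(0) = \psi(-n) \in U$, $t_n \to \infty$, and $\gamma_n(t_n) = \psi(0) = x$ identically, so $x$ is the (trivial) limit and hence lies in $\omega_S(U)$.

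The symmetric statement for $\alpha_S$ follows by running the identical argument with the reverse-time multiflow $(\Phi^S)^*$: the invariance half uses the $\alpha_S$ version of Lemma \ref{omg_inv}, and the $\text{Inv}(U) \subset \alpha_S(U)$ half uses the same orbit $\psi$ with the shift $\gamma_n(s) = \psi(s+n)$ on negative times.

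I expect no real obstacle in this proof. The only subtle point worth flagging is the strengthening of Lemma \ref{omg_inv} that I am using in the first containment: one must reread its proof to confirm that the limiting orbit $\gamma$ actually takes values in $\omega_S(U)$, rather than merely in $S$. If that were not already present, one would need a short separate argument (a diagonal extraction from $\gamma_n(s) = \psi_n(t_n + s)$) before the hypothesis $\omega_S(U) \subset U$ could be applied.
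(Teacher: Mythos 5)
Your proposal is correct and follows essentially the same route as the paper: the forward inclusion is exactly the paper's appeal to Lemma \ref{omg_inv}, and your time-translation argument $\gamma_n(s)=\psi(s-n)$, $t_n=n$ is just a rephrasing of the paper's observation that $\psi(-t_n)\in U$ gives $x\in\Phi^S(t_n,\psi(-t_n))$. The subtlety you flag is not really an issue, since the paper's definition of invariance already requires an orbit $\psi:\R\to\omega_S(U)$ with image in the set itself, which combined with $\omega_S(U)\subset U$ immediately gives $x\in\Inv(U)$.
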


\begin{proof}
The inclusion $\omega_S(U)\subset\text{Inv}(U)$ follows from lemma \ref{omg_inv}.  Therefore we must only show that $\text{Inv}(U)\subset\omega_S(U)$.\\
Let $x\in\Inv(U)$.  Then by definition there is some orbit $\psi:\R\to U$ such that $\psi(0)=x$.  Given a sequence $\{t_n\}\to\infty$, $\psi(-t_n)\in U$, so $x\in\Phi^S(t_n,\psi(-t_n))$ for each $n\in\N$, and so $x\in\omega_S(U)$.
\end{proof}

We are now ready for one of the main theorems of this paper.  

\begin{theorem}\label{att_rep_decomp}
Let $\Phi:\R^+\times X\to\Pw(X)$ be a multiflow over a compact space $X\subset\R^n$ and assume that $S\subset X$ is invariant under $\Phi$.  Let $A$ be an attractor in $S$, $R$ its dual repeller, and $C(A,R)$ the connecting region between them.
\begin{enumerate}
\item $S=A\cup R\cup C(A,R)$ and the sets $A$, $R$ and $C(A,R)$ are all disjoint.\label{decomp_S}
\item $R$ is a repeller in $S$.\label{rep_justified}
\item $C(A,R)=\{x\in S| \,\omega_S(x)\subset A,\,\alpha_S(x)\subset R\}$.\label{conn_reg}
\item $A$ is the dual attractor to $R$. \label{dualdual}
\end{enumerate}
\end{theorem}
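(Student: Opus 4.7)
The plan is to prove the four items in order. Item~\ref{decomp_S} is essentially bookkeeping: by Lemma~\ref{omg_inv}, $A = \omega_S(U)$ is invariant, and monotonicity of $\omega_S$ gives $\omega_S(x) \subset \omega_S(U) = A$ for each $x \in A \subset U$, so $x \notin R$. This yields $A \cap R = \emptyset$, and the remaining claims of Item~\ref{decomp_S} follow from the definition $C(A,R) := (A \cup R)^c$. The deeper items all rely on an auxiliary lemma I would establish first: the existence of a \emph{forward-invariant trapping neighborhood} of $A$, that is, a compact set $N \subset U$ with $A \subset \Int_S(N)$, $\omega_S(N) = A$, and $\Phi^S(t, N) \subset N$ for all $t \geq 0$, with the flexibility to place $N$ inside any preassigned neighborhood of $A$. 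I would build $N$ by mimicking the classical flow construction, using upper-semicontinuity of $\Phi$, compactness of $A$, and Lemma~\ref{extunisolbb} to handle families of orbits. This trapping-region construction is the main obstacle, since non-uniqueness of solutions and the possibility that $\Phi^S(t,x)$ becomes empty prevent a direct translation of the classical argument.

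With $N$ in hand, for Item~\ref{rep_justified} I would set $U^* := S \setminus N$. Then $R \subset U^*$ because $x \in N \subset U$ forces $\omega_S(x) \subset A$, so $x \notin R$. The identity $\alpha_S(U^*) = R$ is proved by a symmetric pair of arguments: for $y \in \alpha_S(U^*)$, take orbits $\psi_n$ with $\psi_n(0) \to y$ and $\psi_n(-t_n) \in U^*$, shift and apply Lemma~\ref{extunisolbb} to produce a full orbit $\gamma \colon \R \to S$ through $y$ that accumulates outside $N$ in backward time, whence $\omega_S(y) \not\subset A$; for $y \in R$, exhibit an orbit from $y$ whose forward $\omega_S$-limit escapes $A$, use this to show it must leave $N$, and reinterpret the exit as a backward approach from $U^*$ to $y$.

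For Item~\ref{conn_reg}, the inclusion $C(A,R) \subset \{\omega_S \subset A,\,\alpha_S \subset R\}$ reduces to a single contradiction argument. Given $x \in C(A,R)$, $\omega_S(x) \subset A$ is immediate from $x \notin R$. If some $y \in \alpha_S(x)$ satisfied $\omega_S(y) \subset A$, I would pick orbits $\psi_n$ with $\psi_n(0) = x$ and $\psi_n(-t_n) \to y$ and shift them to $\phi_n(t) = \psi_n(t - t_n)$. Lemma~\ref{extunisolbb} produces a limit orbit $\phi$ through $y$, and compactness of $S$ together with $\omega_S(y) \subset A$ forces $\phi(t) \to A$; hence $\phi(T) \in \Int_S(N)$ for some $T$, and uniform convergence puts $\phi_n(T) \in N$ eventually. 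Forward invariance of $N$ then traps $\phi_n(t_n) = x$ in $N$, and shrinking $N$ to fit inside any neighborhood of $A$ yields $x \in A$, contradicting $x \in C(A,R)$. The reverse inclusion uses only $A \cap R = \emptyset$ together with invariance of $A$ (which would place a point of $\alpha_S(x)$ inside $A$, precluding $\alpha_S(x) \subset R$ if $x \in A$).

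Item~\ref{dualdual} then follows from the previous two. For $x \in A$, invariance of $A$ gives an orbit $\psi \colon \R \to A$ through $x$, and compactness of $A$ furnishes a subsequential limit of $\psi(-t_n)$ inside $\alpha_S(x) \cap A$, so $\alpha_S(x) \not\subset R$. For $x \notin A$ the point lies in $R$ or in $C(A,R)$: in the first case $\alpha_S(x) \subset R$ follows from invariance of $R$, which is a byproduct of Item~\ref{rep_justified}; in the second case $\alpha_S(x) \subset R$ is exactly Item~\ref{conn_reg}. Hence $A = \{x \in S : \alpha_S(x) \not\subset R\}$, as required.
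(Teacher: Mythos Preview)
Your overall architecture---item~\ref{decomp_S} by bookkeeping, then \ref{rep_justified}, then \ref{conn_reg}, with \ref{dualdual} read off from the previous two---matches the paper. The substantive divergence is the auxiliary construction, and there the proposal has a genuine gap. You stake everything on a forward-invariant trapping neighborhood $N$ with $A\subset\Int_S(N)$, $\Phi^S(t,N)\subset N$ for all $t\ge 0$, and the freedom to place $N$ inside any prescribed neighborhood of $A$. You flag this as the main obstacle but do not build it, and in this weak-invariance setting the ``shrink $N$ down to $A$'' clause is exactly where the classical argument breaks: points of $A$ need only have \emph{some} orbit remaining in $A$, so other forward branches through $A$ may make excursions in $S$ before returning, and any $N$ with $\Phi^S(t,N)\subset N$ for all $t$ must swallow those excursions. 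Your contradiction in item~\ref{conn_reg} depends on letting $N$ shrink to $A$, so without that clause the argument does not close. (There is also a direction slip in your sketch of $\alpha_S(U^*)\subset R$: for $y\in\alpha_S(U^*)$ the approximating orbits satisfy $\psi_n(0)\to y$ with $\psi_n(t_n)\in U^*$ and $t_n\to+\infty$, not $-t_n$; the limit orbit you extract then witnesses something about $\omega_S(y)$, not backward-time behavior.)

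The paper avoids the trapping-region problem entirely. From $\omega_S(U)=A$ it extracts a single time $t^*$ with $\overline{\Phi^S([t^*,\infty),U)}\subset U$ and sets $U^*:=S\setminus\overline{\Phi^S([t^*,\infty),U)}$. There is no claim that the complement of $U^*$ is a neighborhood of $A$; one only uses that $U^*$ is open in $S$, that $(\Phi^S)^*((-\infty,-t^*],U^*)\subset S\setminus U\subset U^*$, and hence $\alpha_S(U^*)\subset U^*$. Lemma~\ref{contained_inv} then gives $\alpha_S(U^*)=\Inv(U^*)$, and $R=\Inv(U^*)$ is checked directly: for $x\in R$ one chooses an orbit whose forward half avoids $U$ (else $\omega_S(x)\subset\omega_S(U)=A$), observes that its backward half cannot visit $U$ either (same reason), so the whole orbit lies in $S\setminus U\subset U^*$; conversely, $x\in\alpha_S(U^*)$ has $\omega_S(x)$ meeting the invariant set $\alpha_S(U^*)\subset U^*$, which is disjoint from $A$. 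With this in hand, item~\ref{conn_reg} is immediate: given $x\in S\setminus A$, first shrink $U$ so that $x\notin U$, rebuild $U^*$, and conclude $\alpha_S(x)\subset\alpha_S(U^*)=R$. No nested family of trapping regions is needed.
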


\begin{proof}
Item \ref{decomp_S} follows directly from the definitions of the relevant sets and is included only for emphasis.  Item \ref{dualdual} follows directly after proving items \ref{rep_justified} and \ref{conn_reg}.\\

\noindent Proof of \ref{rep_justified}:\\

Let $U$ be a neighborhood of $A$ in $S$ such that $\omega_S(U)=A$.  Then there is some time $t^*>0$ such that $\overline{\Phi^S([t^*,\infty),U)}\subset U$; if that were not the case, we could find a sequence of image points of $U$ whose limit was not in $A$.  Define $U^*:= S\setminus \overline{\Phi^S([t^*,\infty),U)}$; note that $S=U\cup U^*$.  We will show that $R=\alpha_S(U^*)$.  

We can see that $(\Phi^S)^*((-\infty,-t^*],U^*)\subset S\setminus U\subset U^*$.  If not, there would be some points $y\in U^*$ and $x\in U$ and some time $\tau>t^*$ such that $x\in (\Phi^S)^*(-\tau,y)$.  But then we would have that $y\in\Phi^S(\tau,x)$, contradicting our assumption on $t^*$.  From this inclusion it follows that $U^*$ is a neighborhood of $\alpha_S(U^*)$, and so by lemma \ref{contained_inv}, $\Inv(U^*)=\alpha_S(U^*)$.  Therefore we can show that $R\subset\alpha_S(U^*)$ by showing that $R\subset\Inv(U^*)$.  

We want to show that if $x\in R$ then there is some orbit with initial condition $x$ that remains in $U^*$ for all time.  Since $S$ is invariant, there is some orbit $\psi:\R\to S$ with $\psi(0)=x$.  Note that if \textit{every} orbit originating at $x$ had to enter $U$ in positive time then it follows that $\omega_S(x)\subset\omega_S(U)$ and so $x\not\in R$.  Therefore, without loss of generality, we can assume that $\psi(\R^+)\cap U=\emptyset$.  Now, if $\psi(-t)\in U$ for any $t>0$ then $\omega_S(x)\subset\omega_S(\psi(-t))\subset A$, contradicting the assumption that $x\in R$.  Thus $\psi(\R^-)\subset U^*$.  Therefore $x\in \Inv(U^*)=\alpha_S(U^*)$ and so $R\subset \alpha_S(U^*)$.

To see that $\alpha_S(U^*)\subset R$, we start by noting that if $x\in\alpha_S(U^*)$ then $\omega_S(x)\cap\alpha_S(U^*)\neq\emptyset$ by the invariance of the $\alpha_S$-limit set.  Since $\alpha_S(U^*)\subset U^*$ and $U^*\cap A=\emptyset$, we conclude $\omega_S(x)\not\subset A$ and so $x\in R$.\\

\noindent Proof of \ref{conn_reg}:\\

It follows directly from the definition that $\omega_S(x)\subset A$ for $x\in S\setminus R$, so we must only show that $\alpha_S(x)\subset R$ for $x\in S\setminus A$.  

Let $x\in S\setminus A$, and call $\delta:=\text{dist}(x,A)$.  Let $U'$ be a neighborhood of $A$ in $S$ such that $\omega_S(U')=A$.  Then $U:=U'\cap B_{\delta/2}(A)$ also satisfies $\omega_S(U)=A$.  As shown in the proof of part \ref{rep_justified}, there is some time $t^*>0$ such that $\overline{\Phi^S([t^*,\infty),U)}\subset U$ and $U^*:=S\setminus\overline{\Phi^S([t^*,\infty),U)}$ satisfies $\alpha_S(U^*)=R$.  Then since $x\in U^*$, $\alpha_S(x)\subset R$.
\end{proof}

We will close out this section with a lemma that gives a way to more easily identify attractors.  If the multiflow moves the closure of a set into its interior at some positive time then the set's $\omega_S$-limit set is an attractor.  This condition is very helpful in identifying attractors because it relies only on checking a single positive time.  

\begin{lemma}\label{1timerule}
Suppose $U\subset S$ and $\Phi^S(t,\overline{U})\subset\Int(U)$ for some $t>0$.  Then $\omega_S(U)$ is an attractor contained in the interior of $U$.  
\end{lemma}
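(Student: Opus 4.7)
The plan is to prove the lemma in three steps: enlarge the trapping region to an open set, identify $\omega_S(U)$ with the $\omega_S$-limit of that enlargement, and then use invariance to force $\omega_S(U)$ inside $\Int(U)$.

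For the first step, I would combine the upper semicontinuity of $\Phi^S$ (inherited from the multiflow $\Phi$) with the compactness of $\overline U$ and the openness of $\Int(U)$ to produce, via a finite-subcover argument over $\overline U$, an open-in-$S$ neighborhood $V$ of $\overline U$ still satisfying $\Phi^S(t,V)\subset\Int(U)$. Since $\Int(U)\subset V$, a straightforward induction on the monoid property yields $\Phi^S(kt,V)\subset\Int(U)$ for every $k\geq 1$. For the second step I would show $\omega_S(U)=\omega_S(V)$; monotonicity of $\omega_S$ gives one inclusion, while the other follows by shifting: if $y=\lim_n\psi_n(t_n)$ with $\psi_n(0)\in V$ and $t_n\to\infty$, then the orbits $s\mapsto\psi_n(s+t)$ begin in $\Phi^S(t,V)\subset\Int(U)\subset U$ and still converge to $y$ at times $t_n-t\to\infty$, witnessing $y\in\omega_S(U)$. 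This simultaneously exhibits $V$ as the candidate open neighborhood that will make $\omega_S(U)$ an attractor, provided the next step places $\omega_S(U)$ inside $V$.

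The third step is the containment $\omega_S(U)\subset\Int(U)$. My plan is to invoke the invariance of $\omega_S(U)$ supplied by Lemma \ref{omg_inv}: for any $y\in\omega_S(U)$ choose an orbit $\gamma:\R\to\omega_S(U)$ with $\gamma(0)=y$ and aim to show $\gamma(-t)\in\overline U$. Once this is in hand, the hypothesis finishes the argument, since $y=\gamma(0)\in\Phi^S(t,\gamma(-t))\subset\Phi^S(t,\overline U)\subset\Int(U)$.

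The principal obstacle is proving the sub-claim $\omega_S(U)\subset\overline U$, which is delicate in the multiflow setting because an orbit of $\Phi^S$ can temporarily leave $U$ between the trapping times $kt$ and return, so a priori $\omega_S(U)$ could inherit points along such excursions. My strategy is to approximate $y\in\omega_S(U)$ by $\psi_n(t_n)$ with $\psi_n(0)\in U$, truncate the times to the integer grid by setting $k_n:=\lfloor t_n/t\rfloor$ so that $\psi_n(k_nt)\in\Int(U)$, and use Lemma \ref{extunisolbb} to extract a uniformly convergent subsequence of the shifted orbits $s\mapsto\psi_n(s+k_nt)$; the limit orbit takes its value at time $0$ in $\overline{\Int(U)}\subset\overline U$, and by passing to a further subsequence so that the fractional part $t_n-k_nt$ converges to some $s_\ast\in[0,t]$ and invoking upper semicontinuity of $\Phi^S(s_\ast,\cdot)$, I would recover $y$ as the image of a point of $\overline U$ under a bounded time. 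Managing this integer-plus-fractional decomposition cleanly, and in particular checking that the limiting orbit really does land in $\overline U$ at the correct time so that $\gamma(-t)\in\overline U$, is the technical heart of the argument.
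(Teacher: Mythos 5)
Your reduction of the conclusion to the sub-claim $\omega_S(U)\subset\overline{U}$ is sound: invariance of $\omega_S(U)$ (Lemma \ref{omg_inv}) together with the hypothesis does give $\omega_S(U)\subset\Phi^S(t,\omega_S(U))\subset\Phi^S(t,\overline{U})\subset\Int(U)$ once that sub-claim is in hand, and at that point your first two steps are not even needed, since $U$ itself already serves as the neighborhood witnessing that $\omega_S(U)$ is an attractor. The problem is that your plan for the sub-claim does not close, and it fails exactly at the point you flag as the ``technical heart.'' The integer-grid argument controls the orbits only at the times $k_nt$: after extracting limits you obtain $y=\gamma(s_*)$ with $\gamma(0)\in\overline{U}$ and $s_*\in[0,t]$, i.e.\ $y\in\Phi^S(s_*,\overline{U})$ for some bounded time $s_*$. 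But nothing in the hypothesis constrains $\Phi^S(s,\overline{U})$ for $0<s<t$ --- an orbit may wander far from $U$ during the fractional remainder, and that is precisely where the points $\psi_n(t_n)$ sit when $t_n$ is not a multiple of $t$ --- so ``image of a point of $\overline{U}$ under a bounded time'' does not yield $y\in\overline{U}$, and the argument stalls.

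The missing ingredient, which is how the paper proceeds, is to upgrade the single trapping time to an open interval of trapping times. Choose an open set $W$ with $\Phi^S(t,\overline{U})\subset W\subset\overline{W}\subset\Int(U)$; a compactness argument using Lemma \ref{extunisolbb} (equicontinuity of orbits plus closedness of the solution set under uniform limits) shows that $\Phi^S((t-\varepsilon,t+\varepsilon),\overline{U})\subset W$ for some $\varepsilon>0$. Every $t'>t^2/\varepsilon$ is a finite sum of numbers in $(t-\varepsilon,t+\varepsilon)$, so the monoid property of $\Phi^S$ gives $\Phi^S([t',\infty),\overline{U})\subset W$, and then $\omega_S(U)\subset\overline{W}\subset\Int(U)$ follows directly from the definition of $\omega_S$ as an intersection of closures of tails. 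This interval-of-times step is the one genuinely new idea your proposal lacks; with it in place, neither the enlargement of $\overline{U}$ to an open $V$ in your first step nor the identification $\omega_S(U)=\omega_S(V)$ in your second step is necessary.
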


\begin{proof}
Since $\Phi^S(t,\overline{U})\subset\Int(U)$, there is an open set $V$ such that $\Phi^S(t,\overline{U})\subset V\subset\overline{V}\subset\Int(U)$.  Then there is some $\varepsilon>0$ such that $\Phi^S((t-\varepsilon,t+\varepsilon),\overline{U})\subset V$.  If that were not the case then there would be a sequence  of times $t_n\to t$ and associated points $x_n\in \overline{U}$ and orbits $\psi_n$ with $\psi_n(0)=x_n\in \overline{U}$ and $\psi_n(t_n)\not\subset V$.  By lemma \ref{extunisolbb}, there is an orbit $\psi$ such that on any compact interval $I\subset \R$ there is some sequence $\{n_k\}_{k=1}^\infty$ where $\psi_{n_k}|_I\to\psi|_I$.  But $\psi(0)=\lim_{k\to\infty}\psi_{n_k}(0)\in \overline{U}$ and $\psi(t)\lim_{k\to\infty}\psi_{n_k}(t)\in V^c$, contradicting our assumption that $\Phi^S(t,\overline{U})\subset V$.

Then if $t'>t^2/\varepsilon$, we can write $t'=s_1+\cdots+s_m$ where $s_i\in(t-\varepsilon,t+\varepsilon)$.  Then $$\Phi^S(t',\overline{U})=\Phi^S(s_1+\cdots+s_m,\overline{U})=\Phi^S(s_m,\Phi^S(s_{m-1}(\cdots(\Phi^S(s_2,\Phi^S(s_1,\overline{U})))\cdots)))\subset V$$  We can see this by noticing that $\Phi^S(s_i,\overline{U})\subset\Phi^S((t-\varepsilon,t+\varepsilon),\overline{U})\subset V\subset\Int(U)$, and hence $\Phi^S(s_j,\Phi^S(s_i,\overline{U}))\subset\Phi^S((t-\varepsilon,t+\varepsilon),\overline{U})$.  Then for $t'>t^2/\varepsilon$, $\Phi([t',\infty),\overline{U})\subset\overline{V}\subset\Int(U)$.  Therefore $\omega_S(U)\subset\Int(U)$ and so $\omega_S(U)$ is an attractor.
\end{proof}

\section{Continuation of the Decomposition}

The most important thing about Conley Theory is that the information that it gives is, in some sense, stable under perturbation.  In this section we will briefly review what perturbation means in the setting of differential inclusions, but for a more complete explanation the reader is referred to \cite{thieme2}.  Our main goal for this section will be to make explicit the sense in which the attractor repeller decomposition is stable under perturbation, and also to prove that result.  

\subsection{Perturbed Solutions}

We will begin by defining the perturbation of the set-valued map of a differential inclusion.  The most important feature of this definition is that it directly generalizes the continuous perturbation of a single-valued function.  

\begin{definition}
Let $G\subset\R^n$ and assume that $F:G\times[-1,1]\to\R^n$ meets the Filippov conditions; in particular, $F$ is upper-semicontinuous in both $x$ and $\lambda$ together.  Then the differential inclusion $$\dot{x}\in F(x,\lambda)$$ is considered to be a \textbf{$\lambda$-perturbation} of the differential inclusion $$\dot{x}\in F(x,0)$$
\end{definition}

For this paper, the most important feature of perturbation is that convergent sequences of perturbed solutions converge to solutions of the original differential inclusion, as stated formally in the following lemma:

\begin{lemma}\label{extunisol}\cite{thieme2}
Assume that $X\subset\R^n$ is compact and that $F:X\times[-1,1]\to\R^n$ satisfies the Filippov conditions.  Let $\lambda_k\to 0$ as $k\to\infty$.  Given any sequence $\{x_k:\R\to X\}_{k=1}^\infty$ of  solutions  to the differential inclusion $\dot{x}\in F(x,\lambda_k)$, there is a solution $$x:\R\to X$$ to the inclusion $\dot{x}\in F(x,0)$ such that on any compact interval $[a,b]\subset\R$, there is a subsequence of the restricted family $$\{x_k|_{[a,b]}:[a,b]\to X\}_{k=1}^\infty$$ which converges uniformly to $x|_{[a,b]}$.
\end{lemma}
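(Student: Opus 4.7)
The plan is to prove this as a standard closure theorem for differential inclusions, combining an Arzel\`a--Ascoli compactness argument with a convexity-based passage to the limit in the inclusion. Since the result is a perturbed version of lemma \ref{extunisolbb}, the skeleton of the argument will be the same; the extra work is in handling the joint upper-semicontinuity in $(x,\lambda)$.

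First I would establish a uniform Lipschitz estimate on the family $\{x_k\}$. Because $X\times[-1,1]$ is compact and $F$ is upper-semicontinuous and compact-valued, the image $F(X\times[-1,1])$ is contained in a bounded subset of $\R^n$; call the bound $M$. Since each $x_k$ is absolutely continuous with $\dot{x}_k(t)\in F(x_k(t),\lambda_k)$ almost everywhere, we obtain $|x_k(t)-x_k(s)|\leq M|t-s|$ for all $s,t\in\R$. Hence the family is equicontinuous, and pointwise it lies in the compact set $X$. Applying the Arzel\`a--Ascoli theorem on each interval $[-N,N]$ for $N\in\N$ and extracting a diagonal subsequence, I obtain a subsequence (still denoted $x_k$) and a function $x:\R\to X$ such that $x_k|_{[-N,N]}\to x|_{[-N,N]}$ uniformly for every $N$. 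The limit $x$ is $M$-Lipschitz, hence absolutely continuous.

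The main obstacle will be verifying that $\dot{x}(t)\in F(x(t),0)$ for almost every $t$. Here I would follow the standard Filippov closure argument. Fix a compact interval $[a,b]$. On $[a,b]$ the derivatives $\dot{x}_k$ are uniformly bounded in $L^\infty$, hence form a bounded sequence in $L^2([a,b];\R^n)$; passing to a further subsequence, $\dot{x}_k\rightharpoonup \dot{x}$ weakly in $L^2$. By Mazur's lemma, a sequence of convex combinations of the $\dot{x}_k$ converges to $\dot{x}$ strongly in $L^2$, and after a further subsequence, pointwise almost everywhere. For a fixed $t$ at which this convergence holds, let $\varepsilon>0$. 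By joint upper-semicontinuity of $F$ at $(x(t),0)$, there is $\delta>0$ such that $|y-x(t)|<\delta$ and $|\lambda|<\delta$ imply $F(y,\lambda)\subset B_\varepsilon(F(x(t),0))$. For $k$ large enough, $|x_k(t)-x(t)|<\delta$ and $|\lambda_k|<\delta$, so $\dot{x}_k(t)\in B_\varepsilon(F(x(t),0))$. Since $F(x(t),0)$ is convex, any convex combination of these also lies in the convex set $\overline{B_\varepsilon(F(x(t),0))}$, and so does the pointwise limit. Letting $\varepsilon\to 0$ and using that $F(x(t),0)$ is closed (indeed compact) gives $\dot{x}(t)\in F(x(t),0)$.

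The hard part is precisely this last step: extracting a subsequence along which the derivatives behave well enough that joint upper-semicontinuity together with convexity of the values of $F$ forces the limit to satisfy the unperturbed inclusion. Once that is done, the statement of the lemma follows, since the diagonal subsequence converges uniformly to $x$ on every compact $[a,b]$ by construction.
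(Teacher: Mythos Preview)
The paper does not actually prove this lemma; it is stated with a citation to \cite{thieme2} and used as a black box, just as the unperturbed version (lemma \ref{extunisolbb}) is. So there is no in-paper argument to compare against.

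That said, your proposal is a correct and complete proof, and it is exactly the standard Filippov-type closure argument one would expect to find in \cite{thieme2} or in Filippov's monograph. The two nontrivial points are handled properly: (i) the uniform derivative bound comes from compactness of $X\times[-1,1]$ together with upper-semicontinuity and compact-valuedness of $F$, which indeed forces $\bigcup_{(x,\lambda)} F(x,\lambda)$ to be bounded; and (ii) in the Mazur step, you correctly use that the $\varepsilon$-neighborhood of the convex set $F(x(t),0)$ is itself convex, so that tail convex combinations of the $\dot{x}_k(t)$ stay inside it once $k$ is large enough for the joint upper-semicontinuity to apply. One minor point worth making explicit is that the weak $L^2$ limit of $\dot{x}_k$ really is $\dot{x}$: this follows because $x_k\to x$ uniformly and $x_k(t)-x_k(a)=\int_a^t \dot{x}_k$, so testing against characteristic functions identifies the weak limit with the derivative of $x$. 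With that detail filled in, the argument is complete.
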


\subsection{Isolating Neighborhoods and Isolated Invariant Sets} 

The most basic objects of Conley index theory are the isolating neighborhood and the associated isolated invariant set.  
\begin{definition}
Let $\Phi:\R^+\times X\to X$ be a multiflow.  A compact set $N\subset X$ is called an \textbf{isolating neighborhood for $\Phi$} if its maximal invariant set lies in its interior; that is,
$$\text{Inv}(N;\Phi)\cap\partial N=\emptyset$$
A set $S\subset \R^n$ is called an \textbf{isolated invariant set for $\Phi$} if it is the maximal invariant set in some isolating neighborhood.  That is, $S$ is an isolated invariant set if there is an isolating neighborhood $N$ such that $$S = \text{Inv}(N;\Phi)$$
\end{definition}

Isolated invariant sets are the primary objects that we are concerned with qualitatively describing using the Conley index theory.  In the section describing the attractor-repeller decomposition, we assumed that the space we were decomposing was compact and invariant.  As the following lemma shows, isolated invariant sets meet those criteria, allowing us to use that decomposition in order to describe the dynamics of these sets.  

\begin{lemma}\cite{thieme2}
Isolated invariant sets of a multiflow are compact.
\end{lemma}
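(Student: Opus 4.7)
The plan is to show $S$ is closed; since $S = \text{Inv}(N;\Phi) \subseteq N$ and $N$ is compact, closedness will immediately yield compactness. So I will take a convergent sequence $\{x_k\}_{k=1}^\infty \subset S$ with $x_k \to x$, note that $x \in N$ by closedness of $N$, and aim to produce an orbit $\psi : \R \to N$ with $\psi(0) = x$, which places $x$ in $\text{Inv}(N;\Phi) = S$.

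The key input will be Lemma \ref{extunisolbb}. Since each $x_k \in S = \text{Inv}(N;\Phi)$, the definition of invariance supplies an orbit $\psi_k : \R \to N$ with $\psi_k(0) = x_k$. Viewing each $\psi_k$ as a solution into the compact ambient space $X$ (recall $N \subset X$), Lemma \ref{extunisolbb} yields a solution $\psi : \R \to X$ such that on every compact interval $[a,b]$ some subsequence of $\{\psi_k|_{[a,b]}\}$ converges uniformly to $\psi|_{[a,b]}$. Evaluating at $0$ along the subsequence extracted on, say, $[-1,1]$, I get $\psi(0) = \lim \psi_{k_j}(0) = \lim x_{k_j} = x$.

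The remaining point is to upgrade $\psi : \R \to X$ to $\psi : \R \to N$. This is where I have to be a little careful, but the argument is short: $N$ is closed in $X$, and on any compact interval $[a,b]$ the values $\psi_{k_j}(t)$ lie in $N$, so their uniform limit $\psi(t)$ lies in $\overline{N} = N$ for every $t \in [a,b]$. Exhausting $\R$ by compact intervals gives $\psi(\R) \subset N$. Combined with $\psi(0) = x$, this means $x \in \text{Inv}(N;\Phi) = S$, so $S$ is closed and thus compact.

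The only mild obstacle is bookkeeping around the diagonal extraction in Lemma \ref{extunisolbb}: the lemma only guarantees uniform convergence on each compact interval along a (possibly interval-dependent) subsequence, not a single subsequence converging on all of $\R$. Fortunately nothing in the above argument requires a single global subsequence --- the statements $\psi(0) = x$ and $\psi(t) \in N$ for each fixed $t$ are each obtained from a single compact interval containing that point, so interval-by-interval subsequences are enough.
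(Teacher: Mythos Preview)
The paper does not actually prove this lemma; it is stated with a citation to \cite{thieme2} and no proof is given in the present paper. Your argument is correct and is exactly the natural one: since $S=\Inv(N;\Phi)\subset N$ with $N$ compact, it suffices to show $S$ is closed, and the diagonal/compactness Lemma~\ref{extunisolbb} applied to a sequence of full orbits $\psi_k:\R\to N$ through $x_k\to x$ produces a limiting orbit $\psi:\R\to N$ with $\psi(0)=x$, placing $x\in S$. Your bookkeeping remark about interval-dependent subsequences is also on point and adequately handled.
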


Additionally, attractors in isolated invariant sets are themselves isolated invariant sets.  This fact will be very important when we start discussing perturbation.  

\begin{lemma}\label{att_iso}
Let $S$ be an isolated invariant set for the multiflow $\Phi$.  If $A\subset S$ is an attractor in $S$, then $A$ is an isolated invariant set for the multiflow $\Phi$.  Symmetrically, a repeller in $S$ is an isolated invariant set.
\end{lemma}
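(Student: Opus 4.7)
The plan is to construct, for a given attractor $A$ in $S$, an isolating neighborhood $N_A$ for $A$ in $X$ (not merely in $S$) by shrinking the attractor-neighborhood $U$ from the definition of $A$ so that it sits inside an isolating neighborhood of $S$.

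First I would record that $A$ is compact: $A=\omega_S(U)$ is an intersection of closed sets and a subset of the compact set $S$. Next, because $U$ is a neighborhood of $A$ in the subspace $S$, there is an open set $V\subset X$ such that $V\cap S$ equals the relative interior of $U$ in $S$, and $A\subset V$. Because $S$ is an isolated invariant set, there is an isolating neighborhood $N$ of $S$ with $S\subset\Int(N)$, and so $A\subset\Int(N)$ as well. I would then choose $\varepsilon>0$ small enough that the closed $\varepsilon$-neighborhood of $A$ in $X$, which I will call $N_A$, lies inside $V\cap\Int(N)$. By construction $N_A$ is compact, $A\subset\Int(N_A)$, $N_A\cap S\subset U$, and $N_A\subset N$.

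Next I would verify that $\Inv(N_A,\Phi)=A$. For the inclusion $\Inv(N_A,\Phi)\subset A$, take $x\in\Inv(N_A,\Phi)$ with a witnessing orbit $\psi:\R\to N_A$, $\psi(0)=x$. Since $N_A\subset N$, for every $s\in\R$ the shifted orbit $t\mapsto\psi(t+s)$ exhibits $\psi(s)\in\Inv(N,\Phi)=S$, so in fact $\psi(\R)\subset N_A\cap S\subset U$; in particular $\psi$ is an orbit on $\Phi^S$ lying entirely in $U$. Then for any sequence $t_n\to\infty$ we have $\psi(-t_n)\in U$ and $x=\psi(0)\in\Phi^S(t_n,\psi(-t_n))$, so by definition of the $\omega_S$-limit set $x\in\omega_S(U)=A$. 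The reverse inclusion $A\subset\Inv(N_A,\Phi)$ is immediate from lemma \ref{omg_inv}, which guarantees that $A$ is invariant; every point of $A$ thus admits an orbit staying in $A\subset N_A$. Combined with $A\subset\Int(N_A)$, which gives $\Inv(N_A,\Phi)\cap\partial N_A=\emptyset$, this shows that $N_A$ is an isolating neighborhood of $A$.

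The symmetric statement for a repeller $R$ follows from the same argument with $\alpha_S$ in place of $\omega_S$, the dual multiflow in place of $\Phi^S$, and a neighborhood $U^*$ of $R$ in $S$ with $\alpha_S(U^*)=R$ in place of $U$. The main obstacle I anticipate is just the bookkeeping between interiors taken in $X$ versus in $S$, together with the need to ensure that any orbit trapped inside $N_A$ is forced to live inside the restricted multiflow $\Phi^S$ and inside the attractor-neighborhood $U$; the two-tier sandwich $A\subset N_A\subset N$ together with $N_A\cap S\subset U$ is precisely what accomplishes both of these.
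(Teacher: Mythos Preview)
Your argument is correct. Both your proof and the paper's build the candidate isolating neighborhood by thickening $A$ slightly in $X$ so that the result sits inside $\Int(N)$ and meets $S$ only inside the attractor-neighborhood $U$. The difference lies in the verification step. The paper checks that no boundary point of its neighborhood $\overline{U}$ lies in $\Inv(\overline{U})$: for boundary points outside $S$ it uses $\overline{U}\subset\Int(N)$ and $S=\Inv(N)$, while for boundary points $x\in S\setminus A$ it invokes Theorem~\ref{att_rep_decomp} to conclude $\alpha_S(x)\subset R$, so any backward orbit must exit $\overline{U}$. Your route avoids the attractor--repeller decomposition theorem entirely: you show directly that any orbit trapped in $N_A$ is automatically trapped in $S$ (since $N_A\subset N$), hence in $N_A\cap S\subset U$, and then the argument of Lemma~\ref{contained_inv} gives $x\in\omega_S(U)=A$. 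This is the more elementary path---it relies only on Lemma~\ref{omg_inv} and the definition of $\omega_S$---and in fact yields the slightly stronger conclusion $\Inv(N_A,\Phi)=A$ rather than just $\Inv(N_A,\Phi)\subset\Int(N_A)$. The paper's approach, by contrast, illustrates how the dual repeller governs backward escape from a neighborhood of $A$, which is thematically closer to the continuation arguments in Section~4.
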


\begin{proof}
Let $N$ be an isolating neighborhood for $S$ and let $U'\subset S$ be a neighborhood of $A$ in $S$ such that $\omega_S(U')=A$. Let $d_h(W,Z)$ denote the Hausdorff distance of the compact sets $W$ and $Z$, and define $$\delta:=\min(d_H(A,\partial N),d_H(A,\partial U'))$$Now let $U=U'\cup B_{\delta/2}(A)$.  We will show that $\overline{U}$ is an isolating neighborhood for $A$ in $\Phi$.

We need to show that $\Inv(\overline{U})\subset\Int(U)$, so let $x\in\partial U$. If $x\not\in S$, then we know that $x\not\in\Inv(\overline{U})$ because $S=\Inv(N)$ and $\overline{U}\subset \Int(N)$.  If $x\in S$, then $\alpha_S(x)\subset R$, where $R$ is the dual-repeller of $A$ in $S$.  Since $R\cap U'=\emptyset$, there cannot be an orbit with initial condition $x$ that remains in $\overline{U}$ for all time, and so $x\not\in\Inv(\overline{U})$.

\end{proof}

The most important property of isolating neighborhoods is that they are stable under perturbation.  This fact forms the basis of Conley Index theory, and it was extended to the case of multiflows in \cite{thieme2}.  

In order to state this result, we will assume that the set $X\subset\R^n$ is compact and that $F:X\times[-1,1]\to\Pw(\R^n)$ satisfies the Filippov conditions.  We will use this set-valued map in order to define a family of multiflows.  Define $$\Phi_\lambda:\R^+\times X\to \Pw(X)$$ by saying that $b\in\Phi_\lambda(T,a)$ if and only if there is a solution $x:[0,T]\to X$ of the differential inclusion $\dot{x}\in F(x,\lambda)$ such that $x(0)=a$ and $x(T)=b$.  We will carry these assumptions on the object $F$ and the family of multiflows $\Phi_\lambda$ for the duration of this section.  

It is worth noting here that this notion of perturbation is fairly general.  For instance, it allows us to consider perturbing the so-called Filippov systems--differential inclusions that arise from piecewise continuous differential equations--to smooth systems which limit to the Filippov system.  For more information on the nature and motivation of this sense of perturbation for set-valued dynamical systems, see \cite{thieme2}.

\begin{theorem}\label{pert_thm}\cite{thieme2}
If $N$ is an isolating neighborhood for the multiflow $\Phi_0$ then there exists some $\varepsilon>0$ such that $|\lambda|<\varepsilon$ implies that $N$ is an isolating neighborhood for $\Phi_\lambda$.  
\end{theorem}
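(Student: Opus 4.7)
The plan is to argue by contradiction, reducing the stability of $N$ under perturbation to the convergence-of-solutions result already available in Lemma \ref{extunisol}. Suppose, toward a contradiction, that the conclusion fails. Then there exists a sequence $\lambda_k\to 0$ and points $x_k\in\Inv(N;\Phi_{\lambda_k})\cap\partial N$. By definition of invariance, for each $k$ I can choose an orbit $\psi_k:\R\to N$ of the inclusion $\dot{x}\in F(x,\lambda_k)$ with $\psi_k(0)=x_k$. The hypothesis that $N$ is an isolating neighborhood for $\Phi_0$ gives $\Inv(N;\Phi_0)\cap\partial N=\emptyset$, and the aim is to build an orbit of $\Phi_0$ through a boundary point of $N$ out of the $\psi_k$ to contradict this.

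Next I would apply Lemma \ref{extunisol} to the sequence $\{\psi_k\}$. Because $N$ is compact (isolating neighborhoods are compact by definition) and each $\psi_k$ maps $\R$ into $N$, the hypotheses of the lemma are satisfied. Thus there is a solution $\psi:\R\to N$ of $\dot{x}\in F(x,0)$ and, on every compact interval $[a,b]\subset\R$, a subsequence of $\{\psi_k|_{[a,b]}\}$ converging uniformly to $\psi|_{[a,b]}$. In particular, taking $[a,b]=\{0\}$ (or any interval containing $0$), we obtain $\psi(0)=\lim_{k\to\infty}\psi_k(0)=\lim_{k\to\infty}x_k$ along the selected subsequence. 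Since $\partial N$ is closed, this limit lies in $\partial N$, so $\psi(0)\in\partial N$. Because $\psi$ is an orbit of $\Phi_0$ that remains in $N$ for all time, $\psi(0)\in\Inv(N;\Phi_0)$. This contradicts the assumption that $\Inv(N;\Phi_0)\cap\partial N=\emptyset$, completing the argument.

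The only subtle point is making sure the limit orbit $\psi$ genuinely lies in $N$ (rather than merely in some larger set) and genuinely passes through a boundary point. The first is immediate from compactness of $N$ together with pointwise convergence, and the second is immediate from the closedness of $\partial N$. The main obstacle, conceptually, is ensuring a uniform extraction of a single limiting $\Phi_0$-orbit from a double sequence parameterized by both the perturbation $\lambda_k$ and by time; this is precisely the content encapsulated in Lemma \ref{extunisol}, which itself rests on the Arzelà--Ascoli-type equicontinuity properties of solutions satisfying the Filippov conditions. Once that lemma is granted, the proof is a clean contradiction argument, and no quantitative estimate on $\varepsilon$ needs to be extracted.
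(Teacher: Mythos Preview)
The paper does not supply its own proof of this theorem; it is simply quoted from \cite{thieme2}. Your argument is the natural compactness-and-diagonal-limit proof one would expect, and it is correct: negating the conclusion yields orbits $\psi_k:\R\to N$ of $\Phi_{\lambda_k}$ through boundary points $x_k$, and Lemma~\ref{extunisol} (applied with the compact set $N$ in place of $X$) produces a limiting $\Phi_0$-orbit $\psi:\R\to N$ with $\psi(0)\in\partial N$, contradicting the isolating-neighborhood hypothesis. There is nothing to compare against here, and your reasoning stands on its own.
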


\subsection{Attractor-Repeller Pair Continuation}

The ultimate goal of the Conley Index theory is to obtain results which are stable under perturbation.  Therefore we would like to show that, in some sense, our attractor-repeller pairs are stable up to perturbation of the differential inclusion.  To start that process we will define the continuation of isolated invariant sets.  

\begin{definition}
Let $N\subset X$ be a compact neighborhood, and denote $S_\lambda:=\Inv(N,\Phi_\lambda)$.  Two isolated invariant set $S_{\lambda_0}$ and $S_{\lambda_1}$ are \textbf{related by continuation} or \textbf{$S_{\lambda_0}$ continues to $S_{\lambda_1}$} if $N$ is an isolating neighborhood for all $\Phi_\lambda$, $\lambda\in[-\lambda_0,\lambda_1]\subset[-1,1]$.
\end{definition}

Note that this definition is exactly the same as the definition given in classical Conley Index theory, once the notion of invariance and perturbation has been understood.  Then, as in the classical case, it is worth mentioning here that continuation says nothing explicitly about the invariant sets $S_\lambda$, and is only a statement about isolating neighborhoods.  Indeed, the structure of the invariant sets is allowed to change somewhat drastically while remaining related by continuation.  For instance, a degenerate fixed point is often continued to the empty set.  For a simple example, consider the family of differential equations $\dot{x}=x^2+\lambda$.  Then the interval $[-1,1]$ is an isolating neighborhood for all $\lambda\in[0,1]$, and therefore $S_0=\{0\}$ continues to $S_1=\emptyset$.

This property of continuation is actually a feature of Conley theory and not a bug, allowing us to avoid the complications of bifurcation theory.  By using the Conley index, we can use knowledge of the behavior on the boundary of the isolating neighborhoods to obtain topological information about the associated isolated invariant sets.  At this point in time, the Conley index itself has not been generalized to differential inclusions and multiflows, but the results of this paper lead us to believe that this generalization is possible.

One interesting remark about the continuation of isolated invariant sets is that the invariant sets only change semicontinuously.  That is, isolated invariant sets which are related by continuation may suddenly shrink, as the example involving the degenerate fixed point and the emptyset shows, but they can only grow in a continuous way.  We see this by noticing that if $S$ is an isolated invariant set, then for arbitrarily small $\delta$, the set $\overline{B_\delta(S)}$ is an isolating neighborhood for $S$.  Since this isolating neighborhood is stable under perturbation, the continuation of $S$ is a subset of $\overline{B_\delta(S)}$ for sufficiently small perturbations of the multiflow.  Then because $\delta$ can be made arbitrarily small, it is clear that $S$ cannot grow discontinuously.  Since this result is used in the proof of one of our main theorems, we will state it formally as the following lemma.

\begin{lemma}\label{inv_semi}
Let $S_0$ continue to $S_\lambda$ for $\lambda\in I$, where $I$ is a closed interval around $0$.  Then if $\lambda_n\to0$ and $x_n\in S_{\lambda_n}$, then any convergent subsequence of $\{x_n\}_{n=1}^\infty$ must limit to a point in $S_0$.  
\end{lemma}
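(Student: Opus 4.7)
The plan is to show that this semicontinuity property follows directly from the extraction lemma for perturbed solutions (Lemma \ref{extunisol}), which plays the role here that Arzel\`a-Ascoli plays in the single-valued case. The key observation is that membership in $S_\lambda = \Inv(N,\Phi_\lambda)$ is witnessed by the existence of a full bi-infinite orbit of $\Phi_\lambda$ staying in $N$, and such orbits have enough compactness to pass to limits.

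First I would pass to the given convergent subsequence, say $x_{n_k} \to x$, and relabel so that $x_n \to x$ with $x_n \in S_{\lambda_n}$. For each $n$, by invariance of $S_{\lambda_n}$, there exists an orbit $\psi_n : \R \to S_{\lambda_n} \subset N$ on the multiflow $\Phi_{\lambda_n}$ with $\psi_n(0) = x_n$. Concretely, $\psi_n$ is a solution of $\dot{x} \in F(x,\lambda_n)$ whose image lies in $N$.

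Next I would apply Lemma \ref{extunisol} to the sequence $\{\psi_n\}$ (noting $X$ is compact and $\lambda_n \to 0$) to extract a further subsequence, still denoted $\{\psi_n\}$, converging uniformly on every compact interval $[a,b] \subset \R$ to some solution $\psi : \R \to X$ of $\dot{x} \in F(x,0)$. Because $N$ is compact (hence closed) and $\psi_n(t) \in N$ for all $t$ and all $n$, the pointwise limit satisfies $\psi(t) \in N$ for every $t \in \R$. In particular $\psi$ is an orbit on $\Phi_0$ whose image is contained in $N$, and $\psi(0) = \lim_{n\to\infty} \psi_n(0) = \lim_{n\to\infty} x_n = x$. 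Hence $x \in \Inv(N,\Phi_0) = S_0$, which is the desired conclusion.

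There is no real obstacle here: the lemma is essentially a packaging of upper-semicontinuity of the maximal invariant set under perturbation, and all the heavy lifting has already been done in Lemma \ref{extunisol}. The only point requiring mild care is to remember that the orbit witnessing $x_n \in S_{\lambda_n}$ is not unique and need not vary continuously in $n$; that is precisely why one must extract a limiting orbit via Lemma \ref{extunisol} rather than attempting to pass the parameter $\lambda$ to $0$ directly in a chosen orbit.
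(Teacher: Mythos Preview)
Your proof is correct and is somewhat more direct than the argument the paper sketches. The paper does not give a formal proof of this lemma; instead, the paragraph immediately preceding it outlines the intended reasoning: for any $\delta>0$ the set $\overline{B_\delta(S_0)}$ is itself an isolating neighborhood for $S_0$, and by Theorem~\ref{pert_thm} it remains an isolating neighborhood for $\Phi_\lambda$ when $|\lambda|$ is small, forcing $S_\lambda\subset\overline{B_\delta(S_0)}$ eventually and hence forcing any limit of the $x_n$ into $S_0$. That route is the ``shrinking isolating neighborhoods'' argument familiar from classical Conley theory; it packages the semicontinuity as a corollary of the stability of isolating neighborhoods.

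Your route bypasses Theorem~\ref{pert_thm} entirely and goes straight to the underlying compactness statement, Lemma~\ref{extunisol}: the full orbits $\psi_n:\R\to N$ witnessing $x_n\in S_{\lambda_n}$ sub-converge to a full orbit $\psi:\R\to N$ on $\Phi_0$, which certifies $x=\psi(0)\in\Inv(N,\Phi_0)=S_0$. This is slightly more elementary, since Theorem~\ref{pert_thm} is itself proved (in \cite{thieme2}) via the same extraction mechanism; you are essentially unrolling one layer of abstraction. The paper's approach, on the other hand, makes transparent why the phenomenon is really about isolating neighborhoods rather than about orbits, which matches the philosophy of the continuation section. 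Both arguments are short and either would be acceptable here.
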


With this lemma stated, we are ready to prove one of the key results of this paper, showing that the attractor-repeller decomposition described in the prior section is stable under perturbation.  

\begin{theorem}\label{att_rep_cont_thm}
Attractor-repeller pair decompositions continue.  
\end{theorem}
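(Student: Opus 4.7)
The overall plan is to produce $A_\lambda$ and $R_\lambda$ as isolated invariant sets via theorem \ref{pert_thm}, promote them to an attractor-repeller pair via lemma \ref{1timerule}, and finally identify $R_\lambda$ with the abstract dual repeller of $A_\lambda$. By lemma \ref{att_iso}, $A_0$ and $R_0$ are themselves isolated invariant sets, so I choose compact disjoint isolating neighborhoods $N_A,N_R\subset\Int(N)$. Using the construction inside the proof of theorem \ref{att_rep_decomp}---namely that any trapping neighborhood $U_0$ of $A_0$ in $S_0$ with $\omega_{S_0}(U_0)=A_0$ has forward images eventually contained in any open set around $A_0$---I may shrink $N_A$ so that $\Phi_0^{S_0}(T,\overline{N_A\cap S_0})\subset\Int(N_A)$, and by a dual argument arrange $(\Phi_0^{S_0})^*(-T,\overline{N_R\cap S_0})\subset\Int(N_R)$, for a common $T>0$. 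Theorem \ref{pert_thm} then supplies $\varepsilon>0$ such that for $|\lambda|<\varepsilon$ each of $N,N_A,N_R$ is isolating for $\Phi_\lambda$; set $S_\lambda:=\Inv(N,\Phi_\lambda)$, $A_\lambda:=\Inv(N_A,\Phi_\lambda)$, $R_\lambda:=\Inv(N_R,\Phi_\lambda)$.

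The second step transfers the one-time trap. If $\Phi_\lambda^{S_\lambda}(T,\overline{N_A\cap S_\lambda})\not\subset\Int(N_A)$ for a sequence $\lambda_n\to 0$, take witnessing orbits $\psi_n:[0,T]\to S_{\lambda_n}$ and extend each to a full orbit in $S_{\lambda_n}$ by invariance. Lemma \ref{extunisol} extracts a uniform limit $\psi:[0,T]\to X$ of $\dot x\in F(x,0)$, lemma \ref{inv_semi} forces $\psi([0,T])\subset S_0$, and closedness of $N_A$ and $X\setminus\Int(N_A)$ places $\psi(0)\in N_A\cap S_0$ with $\psi(T)\notin\Int(N_A)$, contradicting the trap at $\lambda=0$. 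Lemma \ref{1timerule} now identifies $\omega_{S_\lambda}(N_A\cap S_\lambda)$ as an attractor in $S_\lambda$ contained in $\Int(N_A)$; by lemma \ref{omg_inv} it is invariant and contained in $N_A$, so it is a subset of $\Inv(N_A,\Phi_\lambda)=A_\lambda$, and the reverse inclusion follows by feeding a full orbit of any $x\in A_\lambda$ into the definition of $\omega_{S_\lambda}$. The dual argument makes $R_\lambda$ a repeller in $S_\lambda$.

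The most delicate step is showing that $R_\lambda$ coincides with the dual repeller $R'_\lambda:=\{x\in S_\lambda\mid \omega_{S_\lambda}(x)\not\subset A_\lambda\}$. The inclusion $R_\lambda\subset R'_\lambda$ is immediate: a full orbit of $x\in R_\lambda$ lies in $N_R$, so its $\omega$-limit is disjoint from $A_\lambda\subset N_A$. For the converse I plan to prove $R'_\lambda\subset N_R$ for small $\lambda$; then invariance of $R'_\lambda$ gives $R'_\lambda\subset\Inv(N_R,\Phi_\lambda)=R_\lambda$. Assume for contradiction $\lambda_n\to 0$ and $x_n\in R'_{\lambda_n}\setminus N_R$ with $x_n\to x$, and pick full orbits $\psi_n:\R\to R'_{\lambda_n}$ through $x_n$. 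The forward trap combined with $A_{\lambda_n}\cap R'_{\lambda_n}=\emptyset$ forces $\psi_n(\R)\cap N_A=\emptyset$: any entry into $N_A$ would trap the orbit there and produce a nonempty $\omega$-limit inside $A_{\lambda_n}\cap R'_{\lambda_n}=\emptyset$. Lemmas \ref{extunisol} and \ref{inv_semi} then yield a limiting full orbit $\psi:\R\to S_0$ through $x$ with $\psi(\R)\cap\Int(N_A)=\emptyset$, so theorem \ref{att_rep_decomp} places $x\in R_0\cup C(A_0,R_0)$. The connecting-region case is ruled out by part (3), which would force $\omega_{S_0}(x)\subset A_0\subset\Int(N_A)$ while the $\omega$-limit of $\psi|_{\R^+}$ is a nonempty compact subset of $X\setminus\Int(N_A)$. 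Hence $x\in R_0\subset\Int(N_R)$, contradicting $x\notin\Int(N_R)$.

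I expect this identification of the dual repeller to be the main obstacle, since it is where the full $\lambda=0$ attractor-repeller structure is really used, both to force the limiting orbit to avoid $A_0$ and to exclude the connecting region. Transferring the one-time trap is the other technical step, but it reduces to a routine compactness argument once lemmas \ref{extunisol} and \ref{inv_semi} are in hand.
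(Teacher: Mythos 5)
Your proposal is correct, and its first two stages coincide with the paper's proof: choose isolating neighborhoods $N_A,N_R$ for $A_0,R_0$ via lemma \ref{att_iso}, invoke theorem \ref{pert_thm}, transfer the one-time trap $\Phi_0^{S_0}(T,N_A\cap S_0)\subset\Int(N_A)$ to small $|\lambda|$ by the compactness argument of lemmas \ref{extunisol} and \ref{inv_semi}, and apply lemmas \ref{1timerule} and \ref{contained_inv} to identify $A_\lambda=\omega_{S_\lambda}(N_A\cap S_\lambda)$. Where you genuinely diverge is the final, delicate step of identifying $R_\lambda$ with the dual repeller $R'_\lambda$ of $A_\lambda$. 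The paper proves the equivalent statement that $\omega_{S_\lambda}(x)\subset A_\lambda$ for all $x\in S_\lambda\setminus N_R$ by unpacking the definition of $\omega_{S_{\lambda_n}}(x_n)$ into a doubly indexed family of orbits $\psi_n^k$ and times $t_n^k$, taking a limit in $n$ for each fixed $k$ and then a further limit in $k$. You instead observe that once $A_\lambda$ is known to be an attractor in $S_\lambda$, theorem \ref{att_rep_decomp} already applies at the perturbed parameter, so $R'_\lambda$ is itself a repeller and hence closed and invariant; this lets you run a single-limit argument on full orbits lying inside $R'_{\lambda_n}$, show they avoid $N_A$ (an entry point would put a nonempty forward limit set inside $A_{\lambda_n}\cap R'_{\lambda_n}=\emptyset$), and derive the contradiction that the limit point lies in $R_0\subset\Int(N_R)$ while also lying outside $\Int(N_R)$. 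Your route buys a cleaner argument with only one passage to the limit and a sharper contradiction, at the cost of invoking the full strength of theorem \ref{att_rep_decomp} at parameter $\lambda$ (in particular the closedness and invariance of $R'_\lambda$), which the paper's definition-chasing argument never needs. Both versions are sound.
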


This result is similar to a result found in \cite{li}, but the concept of invariance used here is different and the notion of perturbation is somewhat more general.  

In the proof of this result, we will need to discuss the interiors and closures of sets relative to other sets.  To do so, we will adopt the convention that $\Int(W;Z)$ and $\cl(W;Z)$ respectively denote the interior and closure of the set $W$ relative to $Z$.  Additionally, the notation $W\setminus Z$ does not imply here that $Z\subset W$, but merely is intended to convey the notion $W\setminus (W\cap Z)$.  

Additionally, before beginning this proof, we should acknowledge the role Richard Moeckel played in its development.  He offered some extremely valuable insights into the nature of continuation that come into play here.  

\begin{proof}

Assume that $S_0$ is an isolated invariant set for the multiflow $\Phi_0$ with isolating neighborhood $N$, and let $(A_0,R_0)$ be an attractor-repeller pair decomposition of $S_0$.  Then since $A_0$ and $R_0$ are themselves isolated invariant sets for $\Phi_0$ by lemma \ref{att_iso}, they have isolating neighborhoods $N_A\subset N$ and $N_R\subset N$.  Since isolating neighborhoods are stable under perturbation by lemma \ref{pert_thm}, there is some $\lambda_S>0$ such that $|\lambda|\leq\lambda_S$ implies that $N$ is an isolating neighborhood for $\Phi_\lambda$.  Similarly, there exist  $\lambda_A>0$ and $\lambda_R>0$ such that $N_A$ and $N_R$ remain isolating neighborhoods for $\lambda\in[-\lambda_A,\lambda_A]$ and $\lambda\in[-\lambda_R,\lambda_R]$ respectively.

Let $\lambda_0:=\min(\lambda_S,\lambda_A,\lambda_R)$.  Then for $\lambda\in[-\lambda_0,\lambda_0]$, the isolated invariant sets $A_\lambda:=\Inv(N_A,\Phi_\lambda)$ are related by continuation, the $R_\lambda:=\Inv(N_R,\Phi_\lambda)$ are related by continuation, and the $S_\lambda:=\Inv(N,\Phi_\lambda)$ are related by continuation.  Thus all that remains to check is that $(A_\lambda,R_\lambda)$ is an attractor-repeller pair decomposition for $S_\lambda$ for for sufficiently small $|\lambda|$.

We will start by showing that $A_\lambda$ is an attractor in $S_\lambda$ for small enough $|\lambda|$.  We know that is some time $t^*$ such that $$\Phi_0^{S_0}(t^*, N_A\cap S_0)\subset \Int(N_A\cap S_0; S_0)$$ since $A_0$ is assumed to be an attractor in $S_0$.  For a sufficiently small $|\lambda|$, we will show that $$\Phi^{S_\lambda}_\lambda(t^*, N_A\cap S_\lambda)\subset\Int(N_A\cap S_\lambda; S_\lambda)$$ which implies that $\omega_{S_\lambda}(N_A\cap S_\lambda)$ is an attractor in $S_\lambda$ by lemma \ref{1timerule}.  

If this were not the case, we would have a sequence $\lambda_n\to 0$ and associated points $x_n\in N_A\cap S_{\lambda_n}$ and orbits $\psi_n$ on $\Phi_{\lambda_n}$ satisfying $$\psi_n(0)=x_n\in N_A\cap S_{\lambda_n},\hspace{1cm}\psi_n(t)\not\in\Int(N_A\cap S_{\lambda_n};S_{\lambda_n})$$  By lemma \ref{extunisol}, on a compact interval $I$ containing $0$ and $t$, we can take some subsequence $\{n_k\}_{k=1}^\infty$ of these orbits which converge uniformly to an orbit $\psi$ on $\Phi_0$.  Notice that $$x_{n_k}=\psi_{n_k}(0)\to \psi(0)\in N_A\cap S_0$$ by lemma \ref{inv_semi}.  But lemma \ref{inv_semi} also shows that $\psi(t)\in \cl(S_0\setminus N_A; S_0)$, contradicting our assumption that $\Phi^{S_0}_0(t, N_A\cap S_0)\subset \Int(N_A\cap S_0)$.  Therefore $\omega_{S_\lambda}(N_A\cap S_\lambda)$ is an attractor in $S_\lambda$ for small enough $|\lambda|$.  Since $\omega_{S_\lambda}(N_A\cap S_\lambda)=A_\lambda$ by lemma \ref{contained_inv}, we see that $A_\lambda$ is an attractor as desired.  

We can follow a symmetric argument to see that $R_\lambda$ is a repeller in $S_\lambda$ for small enough $|\lambda|$, and so it only remains to show that $R_\lambda$ is the dual-repeller to $A_\lambda$ in $S_\lambda$.  That is, we must show that $\omega_{S_\lambda}(x)\subset A_\lambda$ for all $x\in S_\lambda\setminus R_\lambda$ for small enough $|\lambda|$.  

In fact, we actually only need to show this property for for $x\in S_\lambda\setminus N_R$.  This restriction is possible because $x\in (S_\lambda\cap N_R)\setminus R_\lambda$ implies that for \textit{any} orbit $\psi$ on $\Phi_\lambda^{S_\lambda}$ such that $\psi(0)=x$, there is some time $t$ such that $\psi(t)\in S_\lambda\setminus N_R$ because $N_R$ is an isolating neighborhood and $S_\lambda$ is an invariant set.  Then if $\omega_{S_\lambda}(x)\not\subset A_\lambda$, then also $\omega_{S_\lambda}(\psi(t))\not\subset A_\lambda$ for some such orbit.  

For the sake of contradiction, assume that this is not the case and $\omega_{S_\lambda}(x)\not\subset A_\lambda$ for all $x\in S_\lambda\setminus N_R$ for small enough $|\lambda|$.  Then there is some sequence $\lambda_n\to 0$ and associated points $x_n\in  S_{\lambda_n}\setminus N_R$ and $y_n\not\in A_{\lambda_n}$ such that $y_n\in \omega_{S_{\lambda_n}}(x_n)$. By the definition of the $\omega$-limit set, that means that for each $n$ there is a sequence of orbits $\{\psi_n^k\}_{k=1}^\infty$ and a sequence of times $t_n^k\to\infty$ such that $$\psi_n^k(0)=x_n,\hspace{1cm}\psi_{n}^k(t_n^k)\to y_n,\hspace{1cm}k\to\infty$$  Without loss of generality, we may assume that $t_n^k<k$ for all $n$.  

As we just saw, however, for $x\in N_A\cap S_{\lambda}$ and $|\lambda|$ sufficiently small, we have that $\omega_{S_\lambda}(x)\subset A_{\lambda}$.  Since $y_n\in \omega_{\lambda_n}(\psi_n^k(t_n^k))$ for any $k$ or $n$, we therefore must have that $$\psi_{n}^k(t_n^k)\in S_{\lambda_n}\setminus N_A$$ for all $n$ and $k$.  

By lemma \ref{extunisol}, for each $k$ there is an orbit $\psi^k$ on $\Phi_0$ such that on any compact interval, $\{\psi_n^k\}_{n=1}^\infty$ has some subsequence which converges uniformly to $\psi^k$.  Taking further subsequences if necessary, we also find limit points $t^k$ of $\{t_n^k\}_{n=1}^\infty\subset[0,k]$, $x$ of $\{x_n\}_{n=1}^\infty\subset N$, and $y^k$ of $\{\psi_n^k(t_n^k)\}_{n=1}^\infty\subset N$.  Notice that $x=\psi^k(0)$ and that $y^k=\psi^k(t^k)$.  Additionally, note that lemma \ref{inv_semi} implies that $x\in \cl(S_0\setminus N_R; S_0)$ and $y^k\in \cl(S_0\setminus N_A; S_0)$.

Since $y^k\in \cl(S_0\setminus N_A; S_0)$, there is some convergent subsequence $$y^{k_m}\to y\in \cl(S_0\setminus N_A; S_0)$$as $k_m\to\infty$.  That is, $$\psi^{k_m}(t^{k_m})\to y\in \cl(S_0\setminus N_A; S_0)$$This implies that $\omega_{S_0}(x)\not\subset A_0$, even though $x\in S_0\setminus R_0$, contradicting our assumption that $(A_0,R_0)$ is an attractor-repeller decomposition of $S_0$.  Therefore $\omega_{S_\lambda}(x)\subset A_\lambda$ for all $x\in S_\lambda\setminus R_\lambda$ for small enough $|\lambda|$, and attractor-repeller decompositions continue.  
\end{proof}

\section{Conclusions and Acknowledgements}
\subsection{Conclusions and Future Work}

The steps in this paper bring us one step closer to generalizing the Conley index theory to the setting of differential inclusions.  The next steps down this path will be to define the generalized Morse decomposition of these objects, and eventually the chain recurrent set.  Each of these tasks should follow fairly naturally from the attractor-repeller decomposition described in this paper.  

Still, there is a lot of work to be done before this generalization is completed.  One significant gap in this setting is the lack of a Lyapunov function for the multiflow.  In the classical attractor-repeller decomposition, there is a Lyapunov function which decreases on the connecting region $C(A,R)$.  While examples indicate that such a function does exist in general for multiflows, proving its existence is much more difficult because of the non-uniqueness of the solutions.  However, a similar result, in a slightly more restricted setting, is proven in \cite{kopanskii}, which demonstrates that the lack of uniqueness is not an insurmountable barrier.  Additionally, the actual index itself has not yet been generalized to differential inclusions, and this step will probably also be very difficult; again, however, such an index has been defined in a slightly more restricted, but still non-unique, setting (\cite{mrozek}).  The results in this paper and others do seem to indicate that this generalization is possible and that Conley index theory can be a useful tool for studying differential inclusions.  

\subsection{Acknowledgements} 
The ideas in this paper owe an enormous debt to Richard Moeckel.  His explanations of the classical Conley index theory were extremely illuminating and our conversations were incredibly valuable.  Additionally, we would like to thank all of the participants of the University of Minnesota's Mathematics of Climate Seminar for listening to an enormous number of presentations about these ideas.  Their comments and questions were crucial in the development of many of these ideas.

{\sc Cameron Thieme}

Department of Mathematics

University of Minnesota

206 Church Street Se

Minneapolis, Minnesota, US

Email: {\it thiem019@umn.edu}

\end{document}